\theoremstyle{plain}
\newtheorem{theorem}{Theorem}[section]
\newtheorem{lemma}[theorem]{Lemma}
\newtheorem{corollary}[theorem]{Corollary}
\newtheorem{conjecture}[theorem]{Conjecture}
\theoremstyle{definition}
\newtheorem{proposition}[theorem]{Proposition}
\newtheorem{definition}[theorem]{Definition}
\newtheorem{example}[theorem]{Example}
\theoremstyle{remark}
\newtheorem{remark}[theorem]{Remark}
\numberwithin{equation}{section}
\tikzstyle{vertex}=[circle, draw, inner sep=0pt, minimum size=3pt]
\newcommand{\vertex}{\node[vertex]}
\begin{document}
	
	%
	%
	%
	%
	%
	%
	%
	%
	%

	\title[ On the Monogenity of Polynomials with Non-Squarefree Discriminants ]
	{On the Monogenity of Polynomials with Non-Squarefree Discriminants }

	
	\author{Rupam Barman}
	\address{Department of Mathematics, Indian Institute of Technology Guwahati, Assam, India, PIN- 781039}
	\email{rupam@iitg.ac.in}

	\author{Anuj Narode}
	\address{Department of Mathematics, Indian Institute of Technology Guwahati, Assam, India, PIN- 781039}
	\email{anujanilrao@iitg.ac.in}
	
	\author{Vinay Wagh}
	\address{Department of Mathematics, Indian Institute of Technology Guwahati, Assam, India, PIN- 781039}
	\email{vinay\_wagh@yahoo.com}
	
	\date{\today}
	
	\thanks{}
	
	\subjclass[2010]{11R04; 11R16; 11R21}
	
	\keywords{Monogenity; power integral basis; discriminant}
	
	\dedicatory{}
	\begin{abstract}
		In 2012, for any integer $n \ge 2$, Kedlaya constructed an infinite class of monic irreducible polynomials of degree $n$ with integer coefficients having squarefree discriminants. Such polynomials are necessarily monogenic. Further, by extending Kedlaya's approach, for any odd prime $q$, Jones constructed a class of degree $q$ polynomials with non-squarefree discriminants. In this article,  using a similar method provided by Jones, we present another infinite class of monogenic polynomials of degree $q$ with non-squarefree discriminants, where $q$ is a prime of the form $ q = q_0 + q_1 - 1 $, with $ q_0 $ and $ q_1 $ being prime numbers. In addition to this we present a class of non-monogenic polynomials whose coefficients are Sterling numbers of the first kind.
	\end{abstract}
	\maketitle
	\section{Introduction}
Let $f(x) \in \mathbb{Z}[x]$ be a monic irreducible polynomial with a root $\theta$ and $K = \mathbb{Q}(\theta)$. We denote by $\mathbb{Z}_K$ the ring of integers of $K$. The index of a polynomial  $\operatorname{ind}(f(x))$ is the index of $\mathbb{Z}[\theta]$ in $\mathbb{Z}_K$, i.e. $\operatorname{ind}(f(x))=[ \mathbb{Z}_K : \mathbb{Z}[\theta] ].$ Note that, $\mathbb{Z}_K$ is a 
free $\mathbb{Z}$-module of rank $\deg(f)$, and $\mathbb{Z}[\theta]$ is a submodule of $\mathbb{Z}_K$ of the same rank so that $[ \mathbb{Z}_K : \mathbb{Z}[\theta] ]$ is finite.
The polynomial $f(x)$ is called monogenic if the index of $f(x)$ is one, i.e., $[ \mathbb{Z}_K : \mathbb{Z}[\theta] ] = 1$. Further, the number field $K$ is said to be monogenic if there exists some $\alpha \in \mathbb{Z}_K$ such that $\mathbb{Z}_K = \mathbb{Z}[\alpha]$.
Note that the monogenity of the polynomial $f(x)$ implies the monogenity of the number field $K$. However, the converse is not true. For example, let $K = \mathbb{Q}(\theta)$, where $\theta$ is a root of $f(x)=x^2-5$. Then, $K$ is monogenic as $\mathbb{Z}_K = \mathbb{Z}\left[\frac{1 + \sqrt{5}}{2}\right]$. But, $f(x)$ is not monogenic since $[\mathbb{Z}_K: \mathbb{Z}[\sqrt{5}]] = 2$.
\par 
The discriminant  $\triangle(f)$ of the polynomial $f(x)$ is defined as 
\begin{equation}\label{eq-1.1}
	\triangle(f) = (-1)^{n(n-1)/2} \hspace{3pt} a_n^{2n -2} \hspace{3pt} \prod_{i\neq j} (\alpha_i - \alpha_j),
\end{equation} 
where $ n = \deg(f),$ $a_n$ is the leading coefficient of $f(x)$, and $\alpha_i$'s are the roots of $f(x).$ In Section 2, we define $\triangle(f)$ in terms of the resultant. The discriminant $\triangle(K)$ of the number field $K$ and the discriminant of the polynomial $f(x)$ are related by the well-known equation
\begin{equation} \label{eq-2.1}
	\triangle(f) = \operatorname{ind}(f(x))^2 \cdot \triangle(K).	
\end{equation}
Clearly, if $\triangle(f)$ is squarefree, then $f(x)$ is monogenic. However, the converse is not always true.
\par 
One of the classical and important questions in algebraic number theory is to determine whether the ring of integers of an algebraic number field can be generated by a single element i.e., whether it is monogenic. Further, the problem of describing such fields using arithmetic conditions was first posed by Hasse (cf.~\cite{hasse}). Subsequently, a lot of research has been done related to this problem. For example, Ga\'al studied monogenity of number fields having smaller degree (cf. ~\cite{gaal-1}). Jhorar et al. studied the monogenity of binomials in~\cite{jorar}. In \cite{jhakar-1}, Jhakar et al. studied the primes dividing the index of trinomial. Further, monogenity of quadrinomials, reciprocal polynomials, and composition of binomials were studied in \cite{ahmad, jhakar-11, jhakar-2, jorar,jones_2021-1, Jones_2021, fadil}. In addition, Ga\'al's article \cite{gaal-2} provides a bird's-eye view on the developments related to monogenity.
\par 
In 2012, Kedlaya \cite{Kedlaya} provided a construction of infinitely many monic irreducible polynomials of degree $n = r + 2s$ with integer coefficients, possessing squarefree discriminant and exactly $ r $ real roots. These polynomials are necessarily monogenic. Moreover, he proved that their Galois group is the full symmetric group $ S_n $. In 2020, Jones employed Kedlaya's method to construct an infinite class of irreducible polynomials of prime degree $q \geq 3 $ that are monogenic and have non-squarefree discriminant \cite{Jones_2019}. He used the result of Pasten \cite{pasten} to construct an infinite class of monogenic polynomials. In this article, we construct another infinite class of monogenic polynomials, of prime degree $q$ of the form $q = q_0 + q_1 - 1$, having non-squarefree discriminant. To achieve this, we adapt the method of Jones and employ a theorem of Pasten to establish the infinitude of such polynomials. More precisely, we prove the following:
\begin{theorem}{\label{theorem_1}}
	Let $q \geq 3 $ be a prime of the form $q:= q_0 + q_1 - 1 $, where $ q_0 < q_1 $ are primes and let $ d \geq 1 $ be an integer. Let  $m \geq 1 $ be a squarefree integer such that $q \nmid m $ and $ r \mid m$ for all primes $r$ with $q < r < d(q-1) + 1$. Further, let $q_2$ be a prime such that $q_2 \equiv -1 \pmod q$. Define the  polynomials $$G(x):= q a(x) b(x) \hspace{5pt}\text{and}\hspace{5pt} F_0(x) : = \int_{0}^{x} G(t) dt,$$ where
	$ a(x) := \prod_{i=1}^{q_0-1} (x + i q_2 \cdot m \cdot (q-1)!) \hspace{5pt}\text{and}\hspace{5pt} b(x) := \prod_{j=1}^{q_1-1} (x + j  \cdot m \cdot (q-1)!).$
	Then, there exist infinitely many primes $p$ such that $$F(x): = F_0(x) +qmp^d$$ is monogenic and has non-squarefree discriminant.
	Note that the existence of primes $p$ is unconditional when $d < 4 $ and conditional on the $abc$-conjecture for number fields for $d \geq 4$.
\end{theorem}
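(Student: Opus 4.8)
The plan is to follow the strategy of Jones, organized around the identity $F'(x)=G(x)=q\,a(x)b(x)$, which makes every critical point of $F$ an explicit integer. First I would check that $F$ is a monic integer polynomial of degree $q$. Writing $a(x)b(x)=\sum_{k=0}^{q-1}\tilde g_k x^{k}$ (monic of degree $q-1$), every root of $a$ and $b$ is divisible by $m(q-1)!$, so $\tilde g_k$ is divisible by $(m(q-1)!)^{\,q-1-k}$; hence $(k+1)\mid \tilde g_k$ for $k<q-1$, and therefore $F_0(x)=\sum_k \frac{q\tilde g_k}{k+1}x^{k+1}$ lies in $\mathbb Z[x]$, whence $F=F_0+qmp^{d}\in\mathbb Z[x]$. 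Reducing $F'=qab\equiv 0\pmod q$ kills the coefficients of $x,\dots,x^{q-1}$ and forces $F(x)\equiv x^{q}\pmod q$, while $F(0)=qmp^{d}$ is exactly divisible by $q$ since $q\nmid m$ and $p\neq q$; thus Eisenstein's criterion at $q$ gives irreducibility.

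Next I compute the discriminant and dispose of the forced primes. Since $F$ is monic and $F'=G=qab$ has leading coefficient $q$ and the explicit integer roots $-iq_2m(q-1)!$ $(1\le i\le q_0-1)$ and $-jm(q-1)!$ $(1\le j\le q_1-1)$, the resultant formula yields
\[
\triangle(F)=(-1)^{q(q-1)/2}\operatorname{Res}(F,F')=(-1)^{q(q-1)/2}\,q^{q}\prod_{G(\sigma)=0}F(\sigma),
\]
so $q^{q}\mid\triangle(F)$ and, as $q\ge 3$, the discriminant is non-squarefree. For monogenity I apply Dedekind's criterion at the primes forced to divide $\triangle(F)$. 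Modulo $q$, and modulo any prime $r\mid m$ (where the roots of $a,b$ collapse to $0$), one gets $F(x)\equiv x^{q}$; hence $\bar g=x$, $\bar h=x^{q-1}$, and $T=(x^{q}-F)/\ell$ has $T(0)=-qmp^{d}/\ell$. This equals $-mp^{d}\not\equiv0\pmod q$ for $\ell=q$, and $-q(m/r)p^{d}\not\equiv0\pmod r$ for $\ell=r\mid m$ (using that $m$ squarefree gives $r\nmid m/r$). Thus $\ell\nmid\operatorname{ind}(F)$ for $\ell=q$ and for every $r\mid m$, and $F$ will be monogenic as soon as no prime $\ell\nmid qm$ satisfies $\ell^{2}\mid\prod_{\sigma}F(\sigma)$.

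The decisive observation is that for distinct critical points $\sigma,\sigma'$ one has $F(\sigma)-F(\sigma')=F_0(\sigma)-F_0(\sigma')$, an integer independent of $p$; hence any prime dividing two different factors $F(\sigma),F(\sigma')$ lies in the fixed finite set of divisors of $\prod_{\sigma\neq\sigma'}\bigl(F_0(\sigma)-F_0(\sigma')\bigr)$, which the hypotheses — the scaling by $q_2\equiv-1\pmod q$ together with the requirement that $r\mid m$ for all primes $q<r<d(q-1)+1$ — arrange to divide $qm$, so that they are already absorbed by the Dedekind step. After this reduction a prime $\ell\nmid qm$ divides at most one factor, so $\ell^{2}\mid\prod_\sigma F(\sigma)$ forces $\ell^{2}\mid F(\sigma)$ for a single $\sigma$, where $F(\sigma)=F_0(\sigma)+qm\,p^{d}$ is a polynomial of degree $d$ in $p$. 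Applying the theorem of Pasten to these polynomials, whose fixed square divisors have been cleared by the condition on $m$, produces infinitely many primes $p$ for which every $F(\sigma)$ is squarefree away from $qm$, unconditionally when $d<4$ and under the $abc$-conjecture for number fields when $d\ge4$; for such $p$ we obtain $\operatorname{ind}(F)=1$, so $F$ is monogenic with non-squarefree discriminant.

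The main obstacle I anticipate is the third paragraph: one must determine precisely which primes can occur as fixed square divisors of the critical-value product $\prod_\sigma F(\sigma)$, verify that the conditions on $m$ and on $q_2$ really confine them to the divisors of $qm$, and then reconcile the degree bookkeeping so that Pasten's theorem is invoked with exactly the stated unconditional range $d<4$ rather than a range governed by the full degree $d(q-1)$ of the discriminant in $p$. By comparison, the resultant evaluation and the Dedekind computations, though lengthy, should be routine.
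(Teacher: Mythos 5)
Your first two paragraphs track the paper's argument closely and are sound: the integrality of $F_0$, the $q$-Eisenstein irreducibility of $F$, and the resultant identity $|\triangle(F)| = q^q \prod_{G(\sigma)=0}|F(\sigma)|$ all match the paper, and your use of Dedekind's criterion at $q$ and at the primes dividing $m$ is a legitimate (arguably cleaner) substitute for the paper's appeal to the $\triangle(K)$-valuation theorem for Eisenstein polynomials, since it removes the need to prove $\gcd(qm, f(p))=1$. The gap is in your third paragraph, and it is fatal as written. The claim that any prime dividing two distinct critical values $F(\sigma)$, $F(\sigma')$ must divide $qm$ is unjustified and false in general: such a prime divides the fixed integer $F_0(\sigma)-F_0(\sigma')$, which in the paper's notation equals $m(C_i-C_{i'})$, $m(C_i-D_j)$, or $m(D_j-D_{j'})$, and the integers $C_i-C_{i'}$, $C_i-D_j$, $D_j-D_{j'}$ are enormous fixed numbers whose prime factorizations the hypotheses on $q_2$ and $m$ do not control at all. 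What $q_2\equiv -1\pmod q$ actually buys is only that these differences are nonzero modulo $q$ (hence nonzero), i.e.\ that the factors $qx^d+C_i$ and $qx^d+D_j$ are pairwise distinct (this is Lemma \ref{lemma-1.2}); and what the hypothesis ``$r\mid m$ for all primes $q<r<d(q-1)+1$'' buys is one case of a prime-by-prime local verification, not a global constraint on which primes can divide the differences.

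The correct route --- the paper's --- is to apply the Helfgott--Pasten criterion (Corollary \ref{coro-2.2}) to the full product $f(x)=\prod_i(qx^d+C_i)\prod_j(qx^d+D_j)$, which requires two things your proposal never establishes: (i) $f$ is a product of \emph{distinct irreducible} polynomials (distinctness via Lemma \ref{lemma-1.2}; irreducibility because the reciprocals $C_ix^d+q$ and $D_jx^d+q$ are $q$-Eisenstein); and (ii) for every prime $r$ there is a unit $z$ modulo $r^2$ with $f(z)\not\equiv 0\pmod{r^2}$. Point (ii) is exactly where all the hypotheses are consumed, in four cases: for $r<q$, $r\mid (q-1)!$ forces $C_i\equiv D_j\equiv 0\pmod r$, so $f(1)\equiv q^{q-1}\not\equiv 0$; for $r=q$, $C_i\equiv iq_2$ and $D_j\equiv j$ are units mod $q$; for $q<r\leq d(q-1)+1$, the hypothesis $r\mid m$ together with $C_i\equiv D_j\equiv 0\pmod m$ again gives $f(1)\equiv q^{q-1}$; and for $r\geq d(q-1)+2$, a root-counting argument using $\deg f=d(q-1)<r-1$ shows $f$ cannot vanish at every unit mod $r$. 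Your proposal has no substitute for this case analysis --- in particular, nothing plays the role of the large-prime degree argument --- and once (i) and (ii) are in place, Pasten's conclusion (infinitely many primes $p$ with $f(p)$ squarefree, unconditional for $d\leq 3$ because the threshold depends on the largest irreducible-factor degree $d$, not on $\deg f=d(q-1)$) makes your ``fixed square divisor'' reduction unnecessary. So the missing piece is not degree bookkeeping, as your final paragraph suggests, but the entire local verification on which the application of Pasten's theorem rests.
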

\begin{remark}
	Observe that in Theorem \ref{theorem_1}, infinitely many primes $q_2$ exist satisfying $q_2 \equiv -1 \pmod q$ due to Dirichlet's theorem on primes in arithmetic progressions.
\end{remark}
\section{preliminaries}
In this section, we state the $abc$-conjecture for number fields, which is a generalization of the classical $abc$-conjecture, and present some preliminary results that will be useful in proving Theorem \ref{theorem_1}. More details on the $abc$-conjecture for number fields can be found in \cite[Section 4]{pasten}).
\begin{conjecture}[$abc$-conjecture for number fields] \label{conj-1}
	Let $K$ be a number field. Let $\epsilon > 0$ and fix mutually distinct elements $b_1, \ldots, b_m \in K$. Let $S$ be a finite set of places of $K$. Then, for all but finitely many $\alpha \in K$, one has 
	$$ (M- 2 \epsilon )h_K(\alpha)< \sum_{i=1}^{M}N_{K,S}^{(1)} ( \alpha - b_i ),$$
	where $h_K(\alpha) $ is the height (relative to K) of $\alpha \in K$ and  $N_{K,S}^{(1)}$ is the truncated counting function.
\end{conjecture}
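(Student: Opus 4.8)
The plan is to be candid at the outset: the statement labelled Conjecture \ref{conj-1} is the $abc$-conjecture for number fields, one of the central open problems in Diophantine geometry, and no unconditional proof of it is known. What follows is therefore not a proof but a description of the conjectural framework in which the inequality sits and of the obstruction that has resisted every attempt; in the paper it is, correctly, invoked as a hypothesis rather than established, and the purpose here is to explain why that is unavoidable with present technology.

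The natural route toward the inequality is through Vojta's height conjecture, of which the displayed bound is a special case. First I would recast the data geometrically: regard the marked points $b_1,\dots,b_m$ as a reduced effective divisor $D$ on $\mathbb{P}^1_K$, read $\sum_{i} N_{K,S}^{(1)}(\alpha-b_i)$ as the truncated counting (arithmetic intersection) contribution of $D$, and compare $(M-2\epsilon)h_K(\alpha)$ against the canonical-plus-$D$ height through the adjunction $K_{\mathbb{P}^1}+D \sim (M-2)H$. In Vojta's analogy this inequality is the arithmetic shadow of the truncated Second Main Theorem of Nevanlinna theory, in which the defect constant $M-2$ already appears; Pasten's work, cited in the excerpt, supplies exactly the conditional consequences of this framework that underlie the reduction carried out in Theorem \ref{theorem_1}. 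Setting up this divisor-theoretic translation and recording the expected error terms would display the statement as the one-dimensional case of a uniform conjectural hierarchy.

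The only setting in which this strategy becomes a genuine proof is the function field analogue, and examining it isolates the difficulty precisely. Over a one-variable function field the same statement is the Mason--Stothers theorem, whose proof rests on differentiation: one forms the Wronskian of the relevant sections, bounds its degree, and exploits the vanishing of derivatives at repeated factors, which is exactly what converts a crude height bound into the truncated (radical) bound with the sharp constant. The hard part---indeed the entire difficulty---is that there is no arithmetic analogue of this derivative over a number field: the Wronskian mechanism has no counterpart for $\mathbb{Z}$ or for the ring of integers $\mathbb{Z}_K$, so the device that produces the truncation in the geometric case simply does not exist here. It is this missing differentiation, rather than any routine estimate, that keeps the inequality conjectural; absent a fundamentally new idea bridging that gap, the statement must remain an assumption.
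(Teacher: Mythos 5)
You are right to refuse to prove this: the statement labelled Conjecture~\ref{conj-1} is the open $abc$-conjecture for number fields, which the paper itself states without proof (importing it from Pasten \cite{pasten}) and invokes only as a hypothesis for the case $d \geq 4$ of Theorem~\ref{theorem_1}. Your assessment therefore matches the paper's treatment exactly, and your contextual discussion of Vojta's framework and the missing arithmetic derivative is accurate, if beyond what the paper requires.
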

In Theorem~\ref{theorem_1}, the existence of infinitely many primes $p$ is ensured by the following result, derived from the work of Helfgott and Pasten. In particular, if we have a polynomial that factorizes as a product of distinct irreducible factors, we are interested in determining when there exist infinitely many primes $p$ such that $f(p)$ is squarefree. This is established by the following theorem and its corollary, as stated in \cite[Theorem 2.9 and Corollary 2.10]{Jones_2021}.
\begin{theorem}
	Let $f(x) \in \mathbb{Z}[x]$ be such that $f(x)$ is a product of distinct irreducible factors, where the largest degree of any irreducible factor is $d$. Define $$ N_f(X) = \# \{r \leq X  \mid r \hspace{3pt} \text{is a prime and} \hspace{3pt} f(r) \hspace{3pt} \text{is squarefree} \}.$$
	Then, the following asymptotic holds unconditionally if $d \leq 3$ and holds under the assumption of Conjecture \ref{conj-1} for $f(x)$, if $d \geq 4$: 
	$$ N_f(X) \sim c_f \hspace{2pt} \frac{X}{\log X},$$
	where $$ c_f = \prod_{r \hspace{3pt \text{prime}}} \left( 1 - \frac{\rho_f(r^2)}{r(r-1)}\right)$$ and $\rho_f(r^2)$ is the number of $z \in (\mathbb{Z}/r^2 \mathbb{Z})^* $ such that $f(z) \equiv 0\pmod {r^2}.$
\end{theorem}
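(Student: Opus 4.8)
The plan is to prove the asymptotic by applying a squarefree sieve to the sequence $\{f(r)\}$ as $r$ runs over primes, combining the elementary Hooley--Nagell treatment of small and intermediate moduli with the squarefree-values theorems of Helfgott and Pasten for the large moduli. The value $f(r)$ is squarefree precisely when $\ell^2 \nmid f(r)$ for every prime $\ell$. Since $r$ is prime, for each prime $\ell \neq r$ the reduction $r \bmod \ell^2$ is a unit; this is exactly why the local obstruction at $\ell$ is measured by $\rho_f(\ell^2)$, the number of zeros of $f$ in $(\mathbb{Z}/\ell^2\mathbb{Z})^*$, and the finitely many exceptional primes $r = \ell$ or $r \le \deg f$ contribute only $O(1)$. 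I would therefore write
$$ N_f(X) = \pi(X) - \#\{\, r \le X \text{ prime} : \ell^2 \mid f(r) \text{ for some prime } \ell \,\}, $$
and estimate the subtracted term by inclusion--exclusion after splitting the range of $\ell$ at two thresholds $\xi_1 = \xi_1(X) \to \infty$ and $\xi_2 \asymp X^{1/2}$, the final cutoff being $\ell \ll X^{d/2}$ imposed by the size $|g(r)| \ll X^{d}$ of the irreducible factors $g$.

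\emph{Small and intermediate moduli.} For $\ell \le \xi_1$ I would count the primes $r \le X$ in each of the $\rho_f(\ell^2)$ forbidden unit classes modulo $\ell^2$ by the prime number theorem in arithmetic progressions (Siegel--Walfisz), which produces the main term
$$ \pi(X) \prod_{\ell \le \xi_1} \left( 1 - \frac{\rho_f(\ell^2)}{\ell(\ell-1)} \right) + o\!\left(\frac{X}{\log X}\right). $$
Because $f$ is a product of distinct irreducible factors it is squarefree as a polynomial, so $\rho_f(\ell^2) = \rho_f(\ell) \le \deg f$ for all but the finitely many $\ell$ dividing $\operatorname{disc}(f)$ or the leading coefficient; hence $\sum_{\ell} \rho_f(\ell^2)/(\ell(\ell-1))$ converges, the truncated product tends to $c_f$ as $\xi_1 \to \infty$, and the main term is $c_f\,\pi(X)(1+o(1))$. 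For the intermediate range $\xi_1 < \ell \le \xi_2$ the crude bound $\#\{r \le X : \ell^2 \mid f(r)\} \le \rho_f(\ell^2)(X/\ell^2 + 1)$ together with $\rho_f(\ell^2) \ll 1$ shows this contribution is $o(X/\log X)$ once $\xi_1 \to \infty$.

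\emph{Large moduli: the crux.} The essential difficulty is the range $\xi_2 < \ell \ll X^{d/2}$, where $\ell^2$ may exceed $X$; here the elementary bound gives only $O(1)$ per modulus against $\asymp X^{d/2}$ moduli, so the naive sum diverges and a genuinely arithmetic input is required. For such large $\ell$ one first checks that $\ell^2 \mid f(r)$ essentially forces $\ell^2 \mid g(r)$ for a single irreducible factor $g$ (the alternative, $\ell$ dividing two distinct factors, confines $\ell$ to divisors of finitely many resultants and is absorbed into the earlier ranges), which is why the governing parameter is the maximal factor degree $d$ rather than $\deg f$. Bounding $\#\{r \le X : \ell^2 \mid g(r) \text{ for some } \ell > \xi_2\}$ is precisely the squarefree-values problem for the irreducible $g$ of degree at most $d$: it is settled unconditionally by Helfgott's squarefree-sieve estimates when $d \le 3$, and for $d \ge 4$ it follows by applying Pasten's $abc$-conjecture for number fields (Conjecture \ref{conj-1}) to $g$, which yields the power saving that makes this contribution $o(X/\log X)$. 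Combining the three ranges gives $N_f(X) = c_f\,\pi(X) + o(X/\log X) \sim c_f\, X/\log X$, the claimed asymptotic; this is the argument recorded in \cite[Theorem 2.9 and Corollary 2.10]{Jones_2021}. I expect the large-modulus range to be the main obstacle, since it is the sole point at which the unconditional/conditional dichotomy in the statement arises.
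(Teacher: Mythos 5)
Your proposal is correct and reconstructs essentially the argument that stands behind this statement: the paper itself gives no proof, importing the theorem verbatim from \cite[Theorem 2.9 and Corollary 2.10]{Jones_2021}, which rests on exactly the sieve architecture you describe --- Hooley-style splitting of the moduli, Siegel--Walfisz plus the convergent local product for the main term, elementary counting in the intermediate range, reduction to a single irreducible factor via resultants, and Helfgott's unconditional estimates ($d \leq 3$) or Pasten's $abc$-based estimates ($d \geq 4$) for the large moduli, which is precisely where the unconditional/conditional dichotomy arises. Since your one black-boxed step (the large-modulus range) is delegated to the same results of Helfgott and Pasten \cite{pasten} that the cited proof itself invokes, your sketch is at the same level of completeness as the paper's treatment and is consistent with it.
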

\begin{corollary} \label{coro-2.2}
	Let $f(x) \in \mathbb{Z}[x]$ be such that $f(x)$ is a product of distinct irreducible factors, where the largest degree of any irreducible factor is $d$. Further suppose that, for each prime $r$, there exists some $ z \in (\mathbb{Z} /r^2 \mathbb{Z})^{*}$ such that $f(z) \not \equiv 0 \pmod {r^2}.$ If  $d \leq 3$, or if $d \geq 4$ with the assumption of the $abc$-conjecture for number fields for $f(x)$, then there exist infinitely many primes $p$ such that $f(p)$ is squarefree.
\end{corollary}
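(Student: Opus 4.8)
The plan is to deduce this corollary directly from the preceding theorem on the asymptotic density of primes giving squarefree values. The theorem provides the precise asymptotic $N_f(X) \sim c_f \, X/\log X$, where the constant $c_f$ is an Euler-type product over primes $r$ with local factors $1 - \rho_f(r^2)/(r(r-1))$. To obtain \emph{infinitely many} primes $p$ with $f(p)$ squarefree, it suffices to show that $c_f > 0$, since a strictly positive constant forces $N_f(X) \to \infty$ as $X \to \infty$, and hence there must be infinitely many such primes. Thus the entire task reduces to verifying the positivity of the infinite product defining $c_f$.

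First I would analyze each local factor individually. For a fixed prime $r$, the factor $1 - \rho_f(r^2)/(r(r-1))$ is strictly positive precisely when $\rho_f(r^2) < r(r-1)$. Here $\rho_f(r^2)$ counts the residues $z \in (\mathbb{Z}/r^2\mathbb{Z})^*$ with $f(z) \equiv 0 \pmod{r^2}$, and the group $(\mathbb{Z}/r^2\mathbb{Z})^*$ has order exactly $r(r-1)$. The hypothesis of the corollary is precisely that for each prime $r$ there exists at least one unit $z \in (\mathbb{Z}/r^2\mathbb{Z})^*$ with $f(z) \not\equiv 0 \pmod{r^2}$; this guarantees $\rho_f(r^2) \leq r(r-1) - 1 < r(r-1)$, so every local factor lies strictly between $0$ and $1$. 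Consequently no factor vanishes, which rules out the trivial way the product could be zero.

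The main obstacle, and the step requiring the most care, is to show that the infinite product does not converge to zero despite having infinitely many factors each strictly less than $1$. For this I would obtain a uniform bound on $\rho_f(r^2)$ for large $r$. By Hensel-type lifting considerations, for all sufficiently large primes $r$ (those not dividing the leading coefficient or the discriminant of the squarefree polynomial $f$), each simple root of $f$ modulo $r$ lifts uniquely to a root modulo $r^2$, so $\rho_f(r^2)$ is at most the number of roots of $f$ modulo $r$, which is bounded by $\deg f$, a constant independent of $r$. Therefore for large $r$ one has a bound of the shape $\rho_f(r^2)/(r(r-1)) = O(1/r^2)$. I would then invoke the standard convergence criterion for infinite products: since $\sum_r 1/r^2$ converges, the product $\prod_r \bigl(1 - O(1/r^2)\bigr)$ converges to a \emph{nonzero} limit. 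Combining this with the positivity of the finitely many remaining factors (the small primes and the bad primes) established in the previous paragraph, I conclude $c_f > 0$, and the corollary follows.
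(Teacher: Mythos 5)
Your proposal is correct, and it is essentially the intended argument: the paper itself states this corollary without proof (quoting it from Jones), and the standard derivation there is exactly yours --- the hypothesis gives $\rho_f(r^2)\leq r(r-1)-1$, so every Euler factor of $c_f$ is strictly positive, while for all primes $r$ not dividing the leading coefficient or the (nonzero, since $f$ is squarefree) discriminant, Hensel lifting bounds $\rho_f(r^2)$ by $\deg f$, making $\sum_r \rho_f(r^2)/(r(r-1))$ converge and hence $c_f>0$, so that $N_f(X)\sim c_f X/\log X\to\infty$. You also correctly identify the one point that genuinely needs care, namely that the asymptotic alone is vacuous for infinitude unless one verifies $c_f\neq 0$.
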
	
\begin{definition}
	Let $q$ be a prime and let $ f(x) = a_0 + a_1 x + a_2x^2 + \cdots + a_nx^n \in \mathbb{Z}[x].$ We say $f(x) $ is $q$-Eisenstein, or Eisenstein with respect to $q$, if $q  \nmid a_n$, $q \mid a_i$ for all $i <n$ and $q^2  \nmid a_0$.
\end{definition}
\begin{theorem}[Eisenstein Criterion] \emph{\cite[Exercise 3.11]{cohen}}
	If  $q$ is  a prime and $f(x) \in \mathbb{Z}[x]$ is a $q$-Eisenstein polynomial, then $f(x)$ is irreducible. 
\end{theorem}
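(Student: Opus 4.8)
The plan is to argue by contradiction, reducing modulo $q$ and invoking Gauss's lemma. Write $f(x) = a_0 + a_1 x + \cdots + a_n x^n$ with $q \nmid a_n$, $q \mid a_i$ for all $i < n$, and $q^2 \nmid a_0$, and suppose toward a contradiction that $f$ factors nontrivially over $\mathbb{Q}$. By Gauss's lemma this descends to a factorization $f = gh$ in $\mathbb{Z}[x]$ with $g = \sum_i b_i x^i$ and $h = \sum_j c_j x^j$ of degrees $k$ and $n-k$ respectively, where $0 < k < n$; thus it suffices to rule out such an integer factorization.

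Next I would reduce the coefficients modulo $q$, denoting the image of a polynomial in $\mathbb{F}_q[x]$ by a bar. The hypotheses give $\overline{f} = \overline{a_n}\, x^n$ with $\overline{a_n} \neq 0$. The leading-coefficient relation $b_k c_{n-k} = a_n$ together with $q \nmid a_n$ forces $q \nmid b_k$ and $q \nmid c_{n-k}$, so reduction preserves degrees: $\deg \overline{g} = k$ and $\deg \overline{h} = n-k$. Since $\mathbb{F}_q[x]$ is a UFD in which $x$ is the only prime dividing $\overline{f} = \overline{a_n}\, x^n$, unique factorization forces $\overline{g} = \overline{b_k}\, x^k$ and $\overline{h} = \overline{c_{n-k}}\, x^{n-k}$.

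Finally I would extract the contradiction from the constant terms. Because $k \geq 1$ and $n - k \geq 1$, the constant terms of $\overline{g}$ and $\overline{h}$ both vanish, i.e.\ $q \mid b_0$ and $q \mid c_0$; hence $a_0 = b_0 c_0$ is divisible by $q^2$, contradicting $q^2 \nmid a_0$. Therefore no nontrivial factorization exists and $f$ is irreducible over $\mathbb{Q}$.

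The step demanding the most care is verifying that reduction modulo $q$ does not drop the degrees of $g$ and $h$ --- this is precisely where the hypothesis $q \nmid a_n$ enters --- since it is this degree preservation that guarantees both factors are genuine positive powers of $x$ modulo $q$, and hence that both constant terms are divisible by $q$. The appeal to Gauss's lemma and to unique factorization in $\mathbb{F}_q[x]$ are the remaining ingredients and are routine.
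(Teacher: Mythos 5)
Your proof is correct and complete: the descent via Gauss's lemma to a factorization in $\mathbb{Z}[x]$ of the same degrees, the degree preservation under reduction modulo $q$ (which is exactly where $q \nmid a_n$ is used), the unique-factorization argument in $\mathbb{F}_q[x]$ forcing both reduced factors to be monomials, and the final contradiction with $q^2 \nmid a_0$ are all handled properly, including the non-monic case covered by the paper's definition of $q$-Eisenstein. Note that the paper offers no proof of this statement at all --- it is cited as \cite[Exercise 3.11]{cohen} --- so there is nothing to compare against; your argument is the standard one and correctly fills in the cited result.
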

\begin{definition} \label{definition-2}
	The reciprocal of a polynomial $f(x) \in \mathbb{Z}[x]$ is defined as $$\widetilde{f}(x) := x^{\deg(f)} f\left( \frac{1}{x} \right) \in \mathbb{Z}[x].$$
\end{definition} 
The following result is an immediate consequence of Definition \ref{definition-2}.
\begin{proposition} \label{prop-2.7}
	Suppose $f(x) \in \mathbb{Z}[x]$ with $f(0) \neq 0$. Then, $f(x)$ is irreducible if and only if $\widetilde{f}(x)$ is irreducible.
\end{proposition}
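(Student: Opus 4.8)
The plan is to exploit two elementary structural properties of the reciprocal map $f \mapsto \widetilde{f}$: that it is multiplicative and that, on the set of polynomials with nonzero constant term, it is an involution. Together these convert a nontrivial factorization of one polynomial into a nontrivial factorization of its reciprocal, which is precisely what the biconditional on irreducibility demands.

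First I would record the degree bookkeeping that makes everything work. Writing $n = \deg(f)$ and $f(x) = \sum_{i=0}^n a_i x^i$, one has $\widetilde{f}(x) = x^n \sum_{i=0}^n a_i x^{-i} = \sum_{j=0}^n a_{n-j} x^j$, so the leading coefficient of $\widetilde{f}$ is $a_0 = f(0)$. Since $f(0) \neq 0$ by hypothesis, this shows $\deg(\widetilde{f}) = n = \deg(f)$ and also that $\widetilde{f}(0) = a_n \neq 0$. Consequently the hypothesis of nonzero constant term is preserved under the reciprocal operation, and a direct computation gives $\widetilde{\widetilde{f}}(x) = x^n \widetilde{f}(1/x) = x^n (1/x)^n f(x) = f(x)$; thus $f \mapsto \widetilde{f}$ is an involution on polynomials with nonzero constant term. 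Next I would verify multiplicativity: if $g, h \in \mathbb{Z}[x]$ both have nonzero constant term, then $\deg(gh) = \deg(g) + \deg(h)$ and
\[
\widetilde{gh}(x) = x^{\deg(g)+\deg(h)} g(1/x) h(1/x) = \bigl(x^{\deg(g)} g(1/x)\bigr)\bigl(x^{\deg(h)} h(1/x)\bigr) = \widetilde{g}(x)\,\widetilde{h}(x).
\]

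With these two facts in hand the proposition follows by the contrapositive. Suppose $f$ factors nontrivially as $f = gh$ with $\deg(g), \deg(h) \geq 1$. Since $f(0) = g(0)h(0) \neq 0$, both factors have nonzero constant term, so multiplicativity yields $\widetilde{f} = \widetilde{g}\,\widetilde{h}$ with $\deg(\widetilde{g}) = \deg(g) \geq 1$ and $\deg(\widetilde{h}) = \deg(h) \geq 1$; hence $\widetilde{f}$ is reducible. This establishes the forward implication, and applying the same argument to $\widetilde{f}$ together with the involution $\widetilde{\widetilde{f}} = f$ yields the reverse implication.

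The only point requiring genuine care is the degree bookkeeping around the constant term: the equivalence really does fail without $f(0) \neq 0$ (for instance $x^2$ is reducible, yet its reciprocal collapses to the unit $1$ in degree $0$), so I would emphasize that the hypothesis $f(0) \neq 0$ is exactly what guarantees both $\deg(\widetilde{f}) = \deg(f)$ and the involution property. Beyond this there is no real obstacle, and the result is a formal consequence of the two displayed identities above.
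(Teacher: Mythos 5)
Your proof is correct and is precisely the argument the paper has in mind: the paper gives no proof at all, asserting only that the proposition ``is an immediate consequence'' of the definition of the reciprocal, and your involution-plus-multiplicativity argument (with the degree bookkeeping showing $f(0)\neq 0$ forces $\deg(\widetilde{f})=\deg(f)$ and $\widetilde{\widetilde{f}}=f$) is the standard way to make that assertion precise. One minor remark: your contrapositive treats reducibility as a factorization into two positive-degree factors, which is the right notion over $\mathbb{Q}$ and suffices for the paper's application to Eisenstein (hence primitive) polynomials; for strict irreducibility in $\mathbb{Z}[x]$ you would add the one-line observation that a non-unit constant factor is also preserved under the reciprocal map, since $c$ reciprocates to $c$ and the content of $\widetilde{f}$ equals that of $f$.
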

\begin{definition}\label{definition_1}
	We define the $n$th elementary symmetric polynomial $e_n$ in variables $ t_1, t_2, \ldots, t_N  $ as 
	$$e_n := 
	\begin{cases}
		\begin{array}{c@{\quad}l}
			1 & \text{if} \hspace{5pt} n = 0, \\
			
			\displaystyle\sum_{1 \leq j_1 < j_2 < \cdots < j_n \leq N}^{} t_{j_1} t_{j_2} \cdots t_{j_n} & \text{if } \hspace{3pt} 1\leq n \leq N ,\\
			
			0 & \text{if } \hspace{3pt} n < 0 \hspace{3pt}\text{or} \hspace{3pt} n > N.
		\end{array}
	\end{cases}
	$$
\end{definition}
\begin{example}
	Suppose that $c(x) = \prod_{i=1}^{3}(x+t_i)$. Then, $c(x)$ can be written in the form $$ c(x) = x^3 + e_1 x^2 + e_2 x + e_3 ,$$ where $e_1 = t_1 + t_2 + t_3$, $e_2 = t_1 t_2 + t_1 t_3 + t_2 t_3$ and $e_3 = t_1 t_2 t_3$.
\end{example}
\begin{proposition}{\label{proposition_1}}
	Suppose that $c(x) = \prod_{i=1}^{n}(x + t_i) = \sum_{i=1}^{n}\alpha_i x^{n-i}$ and $d(x) = \prod_{i=1}^{m}(x + y_i) = \sum_{i=1}^{m}\beta_i x^{m-i}$. Then, $$c(x) \cdot d(x) = \sum_{k=0}^{m+n} \gamma_k x^{m+n-k},$$ where  $\gamma_k = \sum_{l=0}^{k}\alpha_{k-l} \beta_l, \quad  0 \leq k \leq m+n.$
\end{proposition}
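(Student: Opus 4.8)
The plan is to treat this as the standard Cauchy product (convolution) formula for polynomial multiplication, identifying the coefficients $\alpha_i$ and $\beta_j$ with the elementary symmetric polynomials of Definition \ref{definition_1}. By that definition we have $\alpha_0 = \beta_0 = 1$ (the leading coefficients), together with the convention $\alpha_i = 0$ whenever $i < 0$ or $i > n$, and $\beta_j = 0$ whenever $j < 0$ or $j > m$. This zero-extension is already built into the definition of $e_n$, and it is precisely what makes the single index range $0 \le l \le k$ in the statement legitimate.

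First I would expand the product directly as a double sum,
\[
c(x)\cdot d(x)=\left(\sum_{i=0}^{n}\alpha_i x^{n-i}\right)\left(\sum_{j=0}^{m}\beta_j x^{m-j}\right)=\sum_{i=0}^{n}\sum_{j=0}^{m}\alpha_i\beta_j\,x^{(m+n)-(i+j)}.
\]
Next I would group the terms according to the total power of $x$: the monomial $x^{(m+n)-k}$ arises precisely from those pairs $(i,j)$ with $i+j=k$, so the coefficient $\gamma_k$ of $x^{(m+n)-k}$ equals $\sum_{i+j=k}\alpha_i\beta_j$, the sum being taken over all admissible $(i,j)$ with $0 \le i \le n$ and $0 \le j \le m$.

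Finally I would reindex by setting $l=j$ and $i=k-l$, which turns the constraint $i+j=k$ into the product $\alpha_{k-l}\beta_l$ and rewrites the coefficient as $\gamma_k=\sum_{l}\alpha_{k-l}\beta_l$. The genuinely admissible range for $l$ is $\max(0,k-n)\le l\le\min(k,m)$, but by the zero-convention above every term with $l$ outside this range contributes $0$ (either $\alpha_{k-l}=0$ when $k-l>n$, or $\beta_l=0$ when $l>m$), so I may freely extend the summation to $0 \le l \le k$, obtaining $\gamma_k=\sum_{l=0}^{k}\alpha_{k-l}\beta_l$ for every $0\le k\le m+n$, as claimed.

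The computation itself is routine; the only real point of care—and the step I would watch most closely—is the bookkeeping of the index ranges, specifically justifying that the out-of-range coefficients vanish so that the clean upper limit $k$ in the statement is correct. Once the zero-extension of the $\alpha_i$ and $\beta_j$ is in place, the conclusion is immediate.
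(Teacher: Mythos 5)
Your proof is correct and follows essentially the same route as the paper's: both compute the coefficient of $x^{m+n-k}$ in $c(x)\cdot d(x)$ as the convolution $\sum_{l}\alpha_{k-l}\beta_l$. Your version is simply more explicit about the index bookkeeping (the zero-extension $\alpha_i=0$, $\beta_j=0$ outside the admissible ranges), a point the paper's one-line argument leaves implicit.
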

\begin{proof}
	Let $k$ be such that $0 \leq k \leq m+n$. It can be observed that for each $0 \leq l \leq k$, $\alpha_{k-l}$ is the co-efficient of $x^{n - k + l}$ in $c(x)$ and $\beta_l$ is the co-efficient of $x^{m - l}$ in $d(x)$. Thus, $\gamma_k = \sum_{l=0}^{k}\alpha_{k-l} \beta_l$ is the co-efficient of $x^{m+n-k}$ in $c(x) \cdot d(x)$. Hence, the result follows. 
\end{proof}
\begin{theorem} \emph{\cite[Theorem 3.6]{conrad} \label{thm-1.4}}
	Let $q$ be a prime and $f(x) \in \mathbb{Z}[x]$ be a monic $q$-Eisenstein polynomial with $\operatorname{deg}(f) = n$. Let $K  = \mathbb{Q}(\alpha)$ with $f(\alpha) = 0$. Then,
	\begin{itemize}
		\item[(1)]$q^{n}\ | \ \triangle(K)$ if $n  \equiv 0\pmod q$, 
		\item[(2)]  $q^{n-1} || \triangle(K)$ if $n \not \equiv 0\pmod q$.
	\end{itemize}
\end{theorem}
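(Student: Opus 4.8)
The plan is to localize the entire problem at $q$ and reduce the computation of $v_q(\triangle(K))$ to a single valuation computation involving $f'(\alpha)$, where $v_q$ denotes the $q$-adic valuation on $\mathbb{Q}$. The starting observation is that a $q$-Eisenstein polynomial defines a totally ramified extension at $q$: there is a unique prime $\mathfrak{q}$ of $\mathbb{Z}_K$ above $q$, with ramification index $e = n$ and residue degree $1$, and $\alpha$ is a uniformizer, so that $v_{\mathfrak{q}}(\alpha) = 1$ and $v_{\mathfrak{q}}(q) = n$ (the hypothesis $q^2 \nmid a_0$ is exactly what forces $v_{\mathfrak{q}}(\alpha) = 1$, via $N_{K/\mathbb{Q}}(\alpha) = \pm a_0$). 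A standard consequence of this local structure is that $q \nmid \operatorname{ind}(f(x))$, i.e.\ $\mathbb{Z}[\alpha]$ is $q$-maximal; granting this, equation \eqref{eq-2.1} gives $v_q(\triangle(f)) = v_q(\triangle(K))$, so it suffices to compute the $q$-adic valuation of the polynomial discriminant.

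Next I would rewrite the polynomial discriminant as a norm. Since $f$ is monic, $\triangle(f) = \pm \operatorname{Res}(f, f') = \pm N_{K/\mathbb{Q}}(f'(\alpha))$. Because the residue degree of $\mathfrak{q}$ over $q$ is $1$ and $\mathfrak{q}$ is the only prime above $q$, for any $\beta \in K$ one has $v_q(N_{K/\mathbb{Q}}(\beta)) = v_{\mathfrak{q}}(\beta)$. Hence $v_q(\triangle(K)) = v_q(\triangle(f)) = v_{\mathfrak{q}}(f'(\alpha))$, and everything reduces to estimating $v_{\mathfrak{q}}(f'(\alpha))$.

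Writing $f(x) = x^n + a_{n-1}x^{n-1} + \cdots + a_1 x + a_0$, we have $f'(\alpha) = n\alpha^{n-1} + \sum_{k=1}^{n-1} k\, a_k\, \alpha^{k-1}$, and I would bound each term. Since $q \mid a_k$ for $k < n$, we get $v_{\mathfrak{q}}(a_k) \geq v_{\mathfrak{q}}(q) = n$, so every term with $1 \leq k \leq n-1$ has valuation at least $n + (k-1) \geq n$, while the leading term satisfies $v_{\mathfrak{q}}(n\alpha^{n-1}) = v_{\mathfrak{q}}(n) + (n-1)$ with $v_{\mathfrak{q}}(n) = n\, v_q(n)$. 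The proof then splits on whether $q \mid n$. If $q \nmid n$, then $v_{\mathfrak{q}}(n) = 0$ and the leading term has valuation exactly $n-1$, strictly smaller than all the others; by the ultrametric inequality this minimum is attained, giving $v_{\mathfrak{q}}(f'(\alpha)) = n-1$ and hence $q^{n-1} \| \triangle(K)$, which is part (2). If $q \mid n$, then $v_{\mathfrak{q}}(n) \geq n$, so the leading term has valuation at least $2n-1 \geq n$ as well; now every term has valuation $\geq n$, forcing $v_{\mathfrak{q}}(f'(\alpha)) \geq n$ and therefore $q^n \mid \triangle(K)$, which is part (1).

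The main obstacle is really the first step: establishing cleanly that the Eisenstein condition yields a totally ramified prime with $\alpha$ as uniformizer and, crucially, that $q \nmid \operatorname{ind}(f(x))$. I would obtain this by passing to the completion $K_{\mathfrak{q}} = \mathbb{Q}_q(\alpha)$ and invoking the standard local fact that an Eisenstein polynomial over $\mathbb{Z}_q$ defines a totally ramified extension in which $\{1, \alpha, \ldots, \alpha^{n-1}\}$ is an integral basis, which makes $\mathbb{Z}[\alpha]$ $q$-maximal. Once this is in place, the remaining valuation bookkeeping is routine; the only point demanding care is that in the tame case $q \nmid n$ the leading term is the \emph{unique} term of minimal valuation, so the ultrametric inequality produces exact divisibility rather than a mere lower bound. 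This is precisely the reason part (2) is an equality while part (1) is only a divisibility.
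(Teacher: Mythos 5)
Your proof is correct; the paper itself gives no proof of this statement, citing it to Conrad's notes, and your argument (Eisenstein $\Rightarrow$ total ramification with $\alpha$ a uniformizer and $q \nmid \operatorname{ind}(f)$, then $v_q(\triangle(K)) = v_{\mathfrak{q}}(f'(\alpha))$ via the norm formula, with the case split on $v_{\mathfrak{q}}(n)$) is essentially the standard one appearing in that cited reference. No gaps: the ultrametric argument correctly yields exact valuation $n-1$ in the tame case and only the lower bound $n$ when $q \mid n$, matching the asymmetry between parts (1) and (2).
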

Suppose $f(x)$ and $g(x)$ are two polynomials  over an integral domain $\mathcal{R}$, with quotient
field $K$ and let $\overline{K}$ be an algebraic closure of $K$.
\begin{definition} \cite[Section 3.3.2]{cohen}\label{def-1.5}
	Let $f(x) = a (x - \alpha_1)\cdots(x - \alpha_m) $ and $g(x) =  b (x - \beta_1)\cdots(x - \beta_n)$ be the decomposition of $f(x)$ and $g(x)$ in $\overline{K}.$ Then, the resultant $R(f,g)$ of $f(x)$ and $g(x)$ is given by one of the equivalent formulas: 
	\begin{align*}
		R(f,g) &= a^n g(\alpha_1) \cdots g(\alpha_m) \\
		&= (-1)^{mn} b^n f(\beta_1) \cdots f(\beta_n) \\
		&= a^nb^m \prod_{\substack{1 \leq i \leq m \\ 1 \leq j \leq n}} (\alpha_i - \beta_j).	
	\end{align*}
\end{definition}
We now recall another definition of discriminant of a polynomial, which  is equivalent to \eqref{eq-1.1}.
\begin{definition}\cite[Section 3.3.2]{cohen}\label{def-1.6}
	Let $f(x) \in \mathcal{R}[x]$, with $m = \deg(f)$. Then, the discriminant $\triangle(f)$ of $f(x)$ is equal to the expression: $${(-1)^{m(m-1)/2}} R(f(x),f'(x))/{l(f)},$$
	where $f'(x)$ is the derivative of $f(x)$ and $l(f)$ is the leading coefficient of $f(x)$.
\end{definition}	
The next proposition readily follows from Definition \ref{def-1.5}.
\begin{proposition}
	Suppose that $f(x)$ and $g(x)$ are as defined in Definition \ref{def-1.5}. Then, we have $R(f,g) = (-1)^{mn}R(g,f)$.
\end{proposition}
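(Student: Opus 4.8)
The plan is to invoke the third of the three equivalent formulas for the resultant recorded in Definition~\ref{def-1.5}, namely the fully factored product form, and apply it to both $R(f,g)$ and $R(g,f)$. Writing $m = \deg(f)$ and $n = \deg(g)$ with leading coefficients $a,b$ and roots $\alpha_1,\dots,\alpha_m$ and $\beta_1,\dots,\beta_n$ respectively, that formula gives
$$R(f,g) = a^n b^m \prod_{\substack{1 \leq i \leq m \\ 1 \leq j \leq n}} (\alpha_i - \beta_j).$$
The entire argument is then to write down the analogous expression for $R(g,f)$ and compare factor by factor.

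Concretely, I would next apply the same formula with the roles of $f$ and $g$ interchanged: now $g$ is the ``first'' polynomial (degree $n$, leading coefficient $b$, roots $\beta_j$) and $f$ is the ``second'' (degree $m$, leading coefficient $a$, roots $\alpha_i$). This yields
$$R(g,f) = b^m a^n \prod_{\substack{1 \leq i \leq m \\ 1 \leq j \leq n}} (\beta_j - \alpha_i).$$
Since scalar multiplication commutes, the leading-coefficient factor $b^m a^n$ equals $a^n b^m$, so it matches that of $R(f,g)$ exactly. The only remaining difference is the sign in each factor of the double product: for every pair $(i,j)$ one has $\beta_j - \alpha_i = -(\alpha_i - \beta_j)$, and there are precisely $mn$ such pairs. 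Pulling out these $mn$ signs gives
$$R(g,f) = (-1)^{mn}\, a^n b^m \prod_{\substack{1 \leq i \leq m \\ 1 \leq j \leq n}} (\alpha_i - \beta_j) = (-1)^{mn} R(f,g).$$
To obtain the stated form I would finally multiply both sides by $(-1)^{mn}$ and use that $\bigl((-1)^{mn}\bigr)^2 = 1$, so the relation is symmetric and may equally be written $R(f,g) = (-1)^{mn} R(g,f)$, as claimed.

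I do not anticipate any genuine obstacle here: the result is an immediate consequence of the symmetry of the product formula in Definition~\ref{def-1.5}, and the only points requiring a moment of care are the bookkeeping of the exponents on the leading coefficients (checking that $a^n b^m$ is genuinely invariant under the swap) and the exact count of $mn$ sign changes coming from the double product. Both are routine, so the proof is essentially a one-line computation once the correct formula is selected.
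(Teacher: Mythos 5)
Your proof is correct and matches the paper's intent: the paper gives no written proof, stating only that the proposition ``readily follows from Definition \ref{def-1.5},'' and your argument via the factored product formula $a^n b^m \prod_{i,j}(\alpha_i - \beta_j)$, with the $(-1)^{mn}$ sign coming from flipping each of the $mn$ factors, is exactly that routine verification. No gaps; the bookkeeping of the exponents and the sign count are handled correctly.
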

\section{proof of theorem \ref{theorem_1}}
Before proceeding to the proof of Theorem \ref{theorem_1}, we establish two essential lemmas. Lemma~\ref{lemma-1.1} demonstrates that $F_0(x) \in \mathbb{Z}[x]$, while Lemma \ref{lemma-1.2} shows that two numbers arising in the proof of the main theorem are distinct.
\begin{lemma}\label{lemma-1.1}
	The polynomial $ F_0(x) $ as defined in Theorem \ref{theorem_1} has integer coefficients.
\end{lemma}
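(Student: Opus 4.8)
The plan is to integrate $G$ term by term and verify that every resulting coefficient is an integer. Write $M := m\,(q-1)!$ and note first that $G(x) = q\,a(x)b(x)$ has degree $(q_0-1)+(q_1-1) = q-1$, since $q_0+q_1 = q+1$. Expanding, I would set $G(x) = \sum_{k=0}^{q-1} c_k x^k$, so that $F_0(x) = \int_0^x G(t)\,dt = \sum_{k=0}^{q-1} \frac{c_k}{k+1}\,x^{k+1}$. Hence it suffices to show that $(k+1)\mid c_k$ for every $0 \le k \le q-1$.

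The key observation is that all roots of $a(x)b(x)$ are multiples of $M$: they are $-\,i q_2 M$ for $1 \le i \le q_0-1$ and $-\,jM$ for $1 \le j \le q_1-1$. Writing the product as $a(x)b(x) = \prod_\ell (x + \rho_\ell)$, where each $\rho_\ell$ is a positive multiple of $M$, the coefficient of $x^{(q-1)-s}$ equals the $s$-th elementary symmetric polynomial $e_s$ in the $\rho_\ell$ (Definition \ref{definition_1}), which is therefore divisible by $M^s$. Thus $c_k = q\,e_s$ with $s = (q-1)-k$, and $M^s \mid e_s$.

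To finish the divisibility, I would split into two cases. For $k = q-1$ (i.e. $s = 0$) one has $c_{q-1} = q$, which is divisible by $k+1 = q$. For $0 \le k \le q-2$ one has $s \ge 1$ and $k+1 = q-s$ with $1 \le q-s \le q-1$, so $\gcd(k+1,q) = 1$ and it is enough to prove $(q-s)\mid e_s$. Since $1 \le q-s \le q-1$, the integer $q-s$ is one of the factors appearing in $(q-1)! = 1\cdot 2 \cdots (q-1)$, whence $(q-s)\mid (q-1)!$. As $s \ge 1$, this gives the chain $(q-s)\mid (q-1)! \mid M \mid M^s \mid e_s$, so $(k+1)\mid c_k$, as required.

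The crux of the argument is recognizing that the factor $(q-1)!$ built into every root supplies exactly the factorial divisibility needed to absorb the denominators $k+1 \le q-1$ introduced by integration, with the leading coefficient handled separately by the explicit factor $q$ in $G$. I do not expect a genuine obstacle here; the only care required is the index bookkeeping (the substitution $s = (q-1)-k$ together with the coprimality of $q$ with the smaller denominators) and confirming at the outset that $\deg(a\,b) = q-1$, so that no denominator exceeds $q$.
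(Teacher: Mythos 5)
Your proposal is correct and follows essentially the same route as the paper: both expand $a(x)b(x)$ via elementary symmetric polynomials in the roots, observe that every root is a multiple of $m(q-1)!$ so each coefficient $e_s$ (for $s\geq 1$) is divisible by $(q-1)!$, and conclude that the integration denominators $k+1\leq q-1$ are absorbed, with the leading denominator $q$ cancelled by the explicit factor $q$ in $G$. Your version is in fact slightly more careful than the paper's (you make the divisibility chain and the $s=0$ case explicit, where the paper asserts $e_i\equiv 0 \pmod{(q_0+q_1-2)!}$ without spelling out the argument), but there is no substantive difference in approach.
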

\begin{proof}
	In Definition \ref{definition_1}, let $ N = q_0 - 1 $ and $ t_i = i q_2 m \cdot (q-1)! $ for each $i$ such that $1 \leq i \leq q_0 - 1$. Then, we can express $a(x)$ as follows:
	\[
	a(x) = x^{q_0 - 1} + f_1 x^{q_0 - 2} + f_2 x^{q_0 - 3} + \cdots + f_{q_0 - 1},
	\]
	where $ f_i's $ are the symmetric polynomials. Similarly, by setting $N = q_1 - 1$ and $ t_j = j m \cdot (q-1)! $ for each $ j $ such that $ 1 \leq j \leq q_1 - 1 $, we can write $ b(x)$ as:
	\[
	b(x) = x^{q_1 - 1} + g_1 x^{q_1 - 2} + g_2 x^{q_1 - 3} + \cdots + g_{q_1 - 1},
	\]
	where $ g_i $ are the symmetric polynomials. By Proposition \ref{proposition_1}, we have
	\[
	a(x) \cdot b(x) = x^{q_0 + q_1 - 2} + e_1 x^{q_0 + q_1 - 3} + e_2 x^{q_0 + q_1 - 4} + \cdots + e_{q_0 + q_1 - 2},
	\]
	where $e_k = \sum_{i = 0}^{k} f_i g_{k - i}$ for each $k$ such that $1 \leq k \leq q_0 + q_1 - 2$. Consequently, we find
	\[
	G(x) = q x^{q_0 + q_1 - 2} + q e_1 x^{q_0 + q_1 - 3} + \cdots + q e_{q_0 + q_1 - 2} x.
	\]
	We note that, for $1 \leq i \leq q_0 + q_1 - 2$, we have $ e_i \equiv 0 \pmod{(q_0 + q_1 - 2)!}$. Therefore, we conclude that $e_{q - (u + 1)} \equiv 0 \pmod{u + 1}$ for $0 \leq u < q - 1$. This concludes that $F_0(x) \in \mathbb{Z}[x]$.
\end{proof}
\begin{lemma}\label{lemma-1.2}
	Suppose $F_0(x)$ is the polynomial as defined in Theorem \ref{theorem_1}. Define $C_i$ and $D_j$ as follows: 
	$$C_i = \frac{F_0 (-i q_2 m \cdot (q-1)!)}{m} \quad \text{and} \quad D_j = \frac{F_0 (-j  m \cdot (q-1)!)}{m}, $$ with $1 \leq i \leq q_0 -1$ and $1 \leq j \leq q_1 -1$. Then, $C_i \neq D_j$ for any $i $ and $j$.
\end{lemma}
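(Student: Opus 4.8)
The plan is to reduce the claim to a single congruence modulo $q$: it suffices to show that $C_i \not\equiv D_j \pmod q$ for all admissible $i,j$. Using Lemma~\ref{lemma-1.1}, I would first write $F_0$ explicitly as
\[
F_0(x) = \sum_{u=1}^{q} c_u x^{u}, \qquad c_u = \frac{q\,e_{q-u}}{u}\in\mathbb{Z}, \quad c_q = 1,
\]
where the $e_k$ are the coefficients of $a(x)b(x)$ from the proof of Lemma~\ref{lemma-1.1}. Since $F_0$ has zero constant term, $g(x) := F_0(x)/x = \sum_{u=1}^{q} c_u x^{u-1}$ lies in $\mathbb{Z}[x]$, and from $F_0(x)=x\,g(x)$ together with $M/m=(q-1)!$ I would record the two evaluations
\[
C_i = -\,i\,q_2\,(q-1)!\;g\!\left(-i q_2\, m (q-1)!\right), \qquad D_j = -\,j\,(q-1)!\;g\!\left(-j\, m (q-1)!\right).
\]

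The heart of the argument is to compute $g$ modulo $q$. The key observation is that $q\mid c_u$ for every $1\le u\le q-1$: indeed $c_u = q\,e_{q-u}/u$ is an integer by Lemma~\ref{lemma-1.1}, and since $1\le u<q$ with $q$ prime we have $\gcd(u,q)=1$, which forces $u\mid e_{q-u}$ and hence $q\mid c_u$. Consequently only the leading term survives, giving $g(x)\equiv x^{q-1}\pmod q$ for every integer argument $x$. Because $q\nmid m$, $q\nmid q_2$ (as $q_2\equiv -1\pmod q$) and $q\nmid (q-1)!$, each argument $-iq_2 m(q-1)!$ and $-j m(q-1)!$ is prime to $q$, so Fermat's little theorem yields $g(-iq_2 m(q-1)!)\equiv 1\pmod q$ and $g(-j m(q-1)!)\equiv 1\pmod q$.

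It then remains to combine these congruences with Wilson's theorem, $(q-1)!\equiv -1\pmod q$, and the hypothesis $q_2\equiv -1\pmod q$. This gives
\[
C_i \equiv (-i)(-1)(-1)\cdot 1 = -i \pmod q, \qquad D_j \equiv (-j)(-1)\cdot 1 = j \pmod q .
\]
If $C_i=D_j$, then $-i\equiv j\pmod q$, i.e.\ $q\mid i+j$; but $1\le i\le q_0-1$ and $1\le j\le q_1-1$ force $2\le i+j\le q_0+q_1-2 = q-1 < q$, a contradiction. Hence $C_i\ne D_j$. I expect the only genuinely delicate step to be the vanishing $q\mid c_u$ for $1\le u\le q-1$: this is exactly where the primality of $q$ and the integrality furnished by Lemma~\ref{lemma-1.1} are both needed. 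Once that is established, the two residue classes $\{-i\}$ and $\{j\}$ are separated automatically by the degree relation $q=q_0+q_1-1$, and no analysis of the actual (large) sizes of $C_i$ and $D_j$ is required.
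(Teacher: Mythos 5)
Your proof is correct and takes essentially the same route as the paper's: both reduce everything modulo $q$, observe that all non-leading coefficients of $F_0$ are divisible by $q$ so that only the $x^q$ term survives, and then apply Fermat's little theorem and Wilson's theorem to obtain $C_i \equiv -i \pmod q$ and $D_j \equiv j \pmod q$, concluding from $2 \le i+j \le q-1$. Your write-up is in fact slightly more careful than the paper's, since you explicitly justify why $q \mid c_u$ for $1 \le u \le q-1$ and handle the division by $m$ by factoring $F_0(x) = x\,g(x)$, steps the paper leaves implicit.
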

\begin{proof}
	Note that,
	\begin{equation}\label{eq-3.6}
		C_i \equiv \frac{(-i q_2 m \cdot (q-1)! )^q }{m} \equiv iq_2 \pmod q \hspace{2pt}.
	\end{equation}
	This shows that all the $C_{i}$'s are distinct. Similarly, we have 
	\begin{equation}\label{eq-3.4}
		\hspace{2pt} D_j \equiv \frac{ (-j  m \cdot (q-1)!)^q}{m}  \equiv j \pmod q.
	\end{equation}
	This shows that $D_j$'s are distinct. Next, we have $q_2 \equiv -1 \pmod{q}$ and $2 \leq i + j \leq q - 1$. Therefore, we obtain  
	$$C_i - D_j \equiv i q_2 - j \pmod{q} = -i - j \pmod{q} \not\equiv 0 \pmod{q}.$$ 
	Hence, $C_i \neq  D_j.$
\end{proof}
We are now in a position to present the proof of Theorem~\ref{theorem_1}.
\begin{proof}[Proof of Theorem \ref{theorem_1}]
	We have the equation $$\triangle(F) = [\mathbb{Z}_K:\mathbb{Z}[\theta] ]^2 \cdot \triangle(K),$$
	where $K = \mathbb{Q}(\theta)$ with $\theta$ a root of $F(x)$. Using this equation our goal is to prove that there are infinitely many primes $p$ such that $\triangle(F) = \triangle(K).$  To establish this we compute $| \triangle(F) | $ and show that the squarefull part of $|\triangle(F)|$ divides $|\triangle(K)|$. But, first we prove that $F(x)$ is  $q$-Eisenstein; in fact, it is Eisenstein with respect to every prime divisor of $m$. 
	\par 
	From Lemma \ref{lemma-1.1}, we have $F_0(X) \in \mathbb{Z}[x]$. Also, using the definition of $F_0(x)$, we obtain
	$$F_0(x) =  x^{q_0 + q_1 -1} +  \frac{e_1 q}{q_0 + q_1 -2} x^{q_0 + q_1 -2} + \frac{e_2 q}{q_0 + q_1 -3} x^{q_0 + q_1 -3} + \cdots + qe_{q_0 + q_1 -2}x.$$
	We have $q = q_0 + q_1 -1$, and hence
	\begin{equation}
		F_0(x) =  x^{q} +  \frac{e_1 q}{q-1} x^{q-1} + \frac{e_2 q}{q-2} x^{q-2} + \cdots + qe_{q-1}x.
	\end{equation}
	One can see that $F(x)  = F_0(x) + mp^dq$ is $q$-Eisenstein for any prime $p \neq q$, because $m \not\equiv 0\pmod q$. Hence, $F(x)$ is an irreducible polynomial.
	\par 
	Next, we calculate the discriminant $\triangle(F)$, of the polynomial $F(x)$. We have:
	\begin{align}
		| \triangle(F)| & = |R(F,F')| = |R(F',F)| = |R(G,F)| \notag \\ 
		& = |q^q \prod_{i = 1}^{q_0 - 1} P(-i q_2 m \cdot (q-1)!) \cdot  \prod_{j = 1}^{q_1 -1 } F(-j m \cdot (q-1)!) |  \notag   \\
		& = |q^q \prod_{i = 1}^{q_0 - 1} \left( F_0 (-i q_2 m \cdot (q-1)!) + q m p^d \right) \cdot  \prod_{j = 1}^{q_1 -1 } \left( F_0 (-j m \cdot (q-1)!) + q m p^d \right)|   \notag  \\
		& = |q^q {m}^{q_0 - 1} \prod_{i = 1}^{q_0 -1} (q p^d + C_i) \cdot  {m}^{q_1 - 1} \prod_{j=1}^{q_1 -1} (q p^d + D_j)|, \label{eq-3.2}
	\end{align}
	where, $$C_i = \frac{F_0 (-i q_2 m \cdot (q-1)!)}{m} \quad \text{and } D_j = \frac{F_0 (-j  m \cdot (q-1)!)}{m}, $$ with $1 \leq i \leq q_0 -1 $ and $1 \leq j \leq q_1 -1$. We note that $C_i$'s and $D_j$'s are all integers. By \eqref{eq-3.2}, we see that $|\triangle(F)|$ is not squarefree, for any values of $d$, $m$, $p$ and $q$.
	\par 
	For each $i$, define $$h_i(x):= qx^d +C_i \in \mathbb{Z}[x] \quad \text{and} \quad k_j(x):= qx^d +D_j \in \mathbb{Z}[x],$$ and let $$f(x) = \prod_{i = 1}^{q_0 - 1} h_i(x) \cdot \prod_{j = 1}^{q_1 - 1} k_j(x).$$ 
	From Lemma \ref{lemma-1.2}, we have $C_i \neq D_j$, and consequently, $h_i(x) \neq k_j(x)$. Consider the polynomials $\widetilde{h}_i(x) = C_ix^d + q$ and $\widetilde{k}_j(x) = D_ix^d + q$. Both are $q$-Eisenstein polynomials. Therefore, by Proposition \ref{prop-2.7}, $h_i(x)$ and $h_j(x)$ are irreducible. Hence, $f(x)$ is a product of distinct irreducible factors. 
	\par 
	Now, we prove that there exist infinitely many primes $p$ such that $f(p)$ is squarefree. To establish this, we make use of Corollary \ref{coro-2.2}. We first claim that for each prime $r$, there exist $z \in (\mathbb{Z}{/} r^2 \mathbb{Z})^*$ such that $f(z) \not \equiv 0 \pmod{r^2}$. To prove this claim, it is sufficient to find $z$ with $\gcd(z,r) =1$ such that $f(z) \not \equiv 0 \pmod r$. So, we make four cases:
	\begin{description}
		\item[Case 1] $r<q$. In  this case, because $C_i \equiv 0 \pmod r$ and $D_j \equiv 0 \pmod r$ therefore, we get  $$f(1) \equiv \prod_{i = 1}^{q_0 -1} q \cdot \prod_{j=1}^{q_1 - 1} q  \pmod r \equiv q ^{q_0 + q_1 -2} \pmod r.$$
		Since, $r<q$ we obtain $f(1) \not \equiv  0 \pmod r$. Thus, $z=1$ yields $f(1) \not \equiv 0 \pmod {r^2}$.
		
		\item[Case 2]  $r = q.$ In this case, by \eqref{eq-3.6} and \eqref{eq-3.4} one can observe that $C_i \not \equiv 0 \pmod q$ and $D_j \not \equiv 0 \pmod q$. Hence, $f(x) \equiv \prod_{i =1}^{q_0 -1 } C_i  \cdot \prod_{i =1}^{q_1 -1 } D_j \not \equiv 0 \pmod q$ so, in this case any $z \in \mathbb{Z}$ such that $\gcd(z,q) = 1$ will satisfy $f(z) \not \equiv 0 \pmod q$.
		
		\item[Case 3] $q < r \leq d(q-1) + 1.$ In this case note that $C_i \equiv 0 \pmod m$ and $D_j \equiv 0 \pmod m$ also recall, $m \equiv 0 \pmod r$. Therefore, we get $$f(1) \equiv \prod_{i = 1}^{q_0 -1} q \cdot \prod_{j=1}^{q_1 - 1} q  \pmod r \equiv q ^{q_0 + q_1 -2} \pmod r.$$ 
		Now, following similarly as shown in Case 1, we obtain $f(1) \not \equiv 0 \pmod {r^2}$.
		
		\item[Case 4] $r \geq d(q-1) + 2$. In this case note that, $\deg(f) = d(q_0 + q_1 -2) = d(q-1)$ and $\mathbb{Z}/r\mathbb{Z}$ is a field, so $f(x) $ can have at most $d(q-1)$ roots in $\mathbb{Z}/r\mathbb{Z}$. Therefore, there exist at least two elements $\mathbb{Z}/r\mathbb{Z}$ such that $f(z) \not \equiv 0 \pmod r$ followed by $f(z) \not \equiv 0 \pmod {r^2}.$ This completes the proof of the case.
	\end{description}

	Therefore, by Corollary \ref{coro-2.2}, there exist (unconditionally if $d \leq 3$ and conditional on the $abc$-conjecture of number fields if $d\geq 4$) infinitely many primes $p$ such that $f(p)$ is squarefree.
	
	Among the infinitely many primes, select those greater than $m$, ensuring that $m \not\equiv 0 \pmod{p}$. Next, we prove that~$\gcd(q m, k_j(p))=1$ and $\gcd(q m, h_i(p)) = 1$. Before proceeding to the next step of the proof, recall the following facts: $m \not\equiv 0 \pmod{q}$, as established in \eqref{eq-3.6} and \eqref{eq-3.4}; $C_i \not\equiv 0 \pmod{q}$; and $D_j \not\equiv 0 \pmod{q}$. Furthermore, observe that $C_i \equiv 0 \pmod{m}$ and $D_j \equiv 0 \pmod{m}$. We now proceed to prove that $\gcd(qm, k_j(p)) = 1$.
	Let $t$ be a prime dividing $\gcd(qm, k_j(p))$. Then $t \mid qm$ and $t \mid k_j(p)$. Suppose $t = q$ then $q | D_j$, a contradiction. If $t \neq q$ then we have $k_j(p ) \equiv 0 \pmod t$ i.e., $p^dq + D_j \equiv 0 \pmod t$. Since $D_j \equiv 0 \pmod m$ we get $ p^dq \equiv 0 \pmod t$. This implies $t \mid p$. Also, $m \not  \equiv 0 \pmod p$, and hence $t=1$. Thus, $\gcd( mq, k_j(p)) = 1$. Similarly, one can prove that $\gcd( mq, h_i(p)) = 1$.
	\par
	Now, recall \eqref{eq-2.1} 
	$$ \triangle(F) = \triangle(K) \cdot [\mathbb{Z}_K : \mathbb{Z}[\theta]]^2.$$ 
	Then, \eqref{eq-3.2} yields 
	$$| \triangle(F) | = |q^q m^{q-1} f(p)| =  |\triangle(K)| \cdot [\mathbb{Z}_K : \mathbb{Z}[\theta]]^2.$$ Therefore, we conclude that  $\triangle(K) \equiv0 \pmod{f(p)}$ for any such value of $p$, because $f(p)$ is squarefree, where $K = \mathbb{Q(\theta)}$ with $ F(\theta) = 0$. Finally, since $F(x)$ is an Eisenstein polynomial with respect to $q$ and every prime divisor of $m$. Therefore, by Theorem \ref{thm-1.4}, we get $\triangle(K) \equiv 0 \pmod{q^q {\omega}^{q-1} }$. Hence, $\triangle(F) = \triangle(K)$ which implies $[\mathbb{Z}_K : \mathbb{Z}[\theta]] = 1$. This proves that $F(x)$ is monogenic.
\end{proof}
We now present an example of a monogenic polynomial arising from Theorem \ref{theorem_1}, which, to the best of our knowledge, is not documented in the existing literature.
\begin{example}
	Consider $q_0 = 3$, $q_1 = 5$, $q = 7$, $d = 2$, $m=11$, and $q_2 = 13$. We obtain
	$$
	a(x) = (x + 11 \cdot 13 \cdot 6!)(x + 2 \cdot 11 \cdot 13 \cdot 6!)
	$$ and
	$$b(x) = \prod_{j=1}^{4} (x + j \cdot 11 \cdot 6!).$$  
	The polynomial $F(x)$ is given by 
	\begin{align*}
		P(x) &=  x^7 + 2^3 \cdot 3 \cdot 5 \cdot 7^3 \cdot 11 x^6 + 2^8 \cdot 3^4 \cdot 5 \cdot 7^2 \cdot 11^2 \cdot 109x^5 + 2^{10} \cdot 3^6 \cdot 5^4 \cdot 7^2 \cdot 11^3 \cdot 137x^4 \\ 
		& + 2^{18} \cdot 3^7 \cdot 5^4 \cdot 7^2 \cdot 11^4 \cdot 17 \cdot 29 x^3 + 2^{21} \cdot 3^{10} \cdot 5^5 \cdot 7^4 \cdot 11^5 \cdot 13 x^2 \\ 
		& +  2^{28} \cdot 3^{13} \cdot 5^6 \cdot 7 \cdot 11^6 \cdot 13^2 x + 7 \cdot 11 \cdot 17.
	\end{align*}
	The discriminant of $F(x)$ satisfies
	$$
	| \triangle (F(x)) | = 7^7 \cdot 11^6 \cdot (\text{squarefree part}).
	$$
	Since $F(x)$ is an Eisenstein polynomial with respect to the primes $7$ and $11$, it follows that 
	$$
	7^7 \mid \triangle (K) \quad \text{and} \quad 11^6 \mid \triangle (K).
	$$ 
	Using the relation  
	$$
	\triangle (F(x)) = [\mathbb{Z}_K : \mathbb{Z}_K[\theta]]^2 \triangle(K),
	$$
	we conclude that
	$$
	[\mathbb{Z}_K : \mathbb{Z}_K[\theta]] = 1.
	$$
	This implies that $P(x)$ is monogenic.
\end{example}
\section{Non-Monogenic Polynomials from Stirling Numbers}
In this section we present a class of non-monogenic polynomials whose coefficients are Sterling numbers of first kind. Let $n$ and $k$ be positive integers such that $k \leq n$. The Stirling number
of the first kind, denoted by $s(n,k)$, counts the number of permutations of $n$ elements with $k $ disjoint cycles. One can also characterize $s(n,k)$ by, $$ (x)_n = x(x+1) \cdots (x+n-1) = \sum_{k=0}^{n} s(n,k) x^k.$$
\par 
From \cite[Sections 5.6 and 5.8]{comlet}, we recall the following identities on Stirling numbers of the first kind.  Let $n$ and $k$ be positive integers. Then we have:
\begin{align}
\label{eqn-new-1}s(n, n) &= 1, \\
\label{eqn-new-2}\quad s(n,1) &= (n-1)!,\\
\label{eqn-new-3} \quad s(n,n-1) &= {n \choose 2},\\
\label{eqn-new-4} \quad s(n,0) &= 0,\\
 \label{eqn-new-5} \quad s(n,k) &= 0 \  \text{if} \ k \geq n+1.
\end{align}
Next, we recall a theorem of Hong and Qiu \cite{Hong} on $p$-adic valuations of Stirling numbers of the first kind. We first recall some terminologies. For a prime number $p$, every nonzero rational number $r$ has a unique representation of the form $r= \pm p^k a/b$, where $k\in \mathbb{Z}, a, b \in \mathbb{N}$ and $\gcd(a,p)= \gcd(p,b)=\gcd(a,b)=1$. 
The $p$-adic valuation of such an $r$ is defined as $\nu_p(r)=k$. Let $\mathbb{Z}_p$ and $\mathbb{Q}_p$ denote the ring of $p$-adic integers and the field of $p$-adic numbers, respectively. Recall that, the $n$-th  Bernoulli number $B_n$ is defined by the Maclaurin series as
 $$\frac{x}{e^x -1 } = \sum_{n=0}^{\infty} B_n \frac{x^n}{n!}.$$ 
An odd prime $p$ is said to be a regular prime if $p$ does not divide the numerator of the Bernoulli numbers $B_0, \ldots, B_{p-3}$.
The following theorem of Hong and Qiu gives certain identities involving $p$-adic valuations of Stirling numbers of the first kind. 
\begin{theorem}\emph{\cite[Theorem 1.1]{Hong}} \label{thm-4.1}
	Let $p\geq 5$ be a prime and let $a$ and $k$ be integers such that $1 \leq a \leq p-1$ and $ 2 \leq k \leq ap - 2$. Then each of the following is true:
	\begin{enumerate}[label=(\roman*)]
		\item If $k \equiv \epsilon_k \pmod{p-1}$ then,  $$ \nu_p \left(s(ap,ap-k) \right) = \left(\nu_p(k) + 1 \right) \epsilon_k.$$
		\item If $ 2 \leq k \leq q(p-1) -1 $ and $k \not \equiv \epsilon_k \pmod{p-1}$ then,  $$ \nu_p \left(s(ap,ap-k) \right) \geq \left(\nu_p(k) + 1 \right) \epsilon_k + 1,$$ where the equality holds if and only if $ \nu_p\left(B_{2 \left\lfloor \frac{\langle k \rangle}{2} \right\rfloor
		}\right) = 0 $. In particular, if $p$ is regular then, $ \nu_p \left(s(ap,ap-k) \right) = \left(\nu_p(k) + 1 \right) \epsilon_k + 1.$
		\item If $a \geq 4$ and $a(p-1) + 2 \leq k \leq ap -2,$ then $$ \nu_p \left(s(ap,ap-k) \right) \geq a + k ap.$$
	\end{enumerate}
\end{theorem}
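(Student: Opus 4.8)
The plan is to convert the statement into a $p$-adic estimate for an elementary symmetric function and then analyze that via power sums and Bernoulli numbers. Starting from the defining relation $(x)_n = \prod_{i=0}^{n-1}(x+i) = \sum_{k} s(n,k)x^k$, the coefficient of $x^{n-k}$ is the $k$th elementary symmetric polynomial in $1,2,\ldots,n-1$; taking $n=ap$ this yields
$$ s(ap,ap-k) = e_k(1,2,\ldots,ap-1),$$
so everything reduces to computing $\nu_p\!\left(e_k(1,\ldots,ap-1)\right)$. As a first orientation I would record the mod-$p$ factorization $(x)_{ap} \equiv (x^p-x)^a \pmod p$, obtained by grouping the $ap$ consecutive arguments into $a$ blocks of length $p$ and using $\prod_{i=0}^{p-1}(x+i)\equiv x^p-x \pmod p$; expanding the right side shows that $e_k$ is a $p$-adic unit exactly when $(p-1)\mid k$ and $p\mid e_k$ otherwise, which already matches the gross structure of cases (i)--(iii).

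For the exact valuations I would bring in Newton's identities
$$ k\,e_k = \sum_{j=1}^{k}(-1)^{j-1}e_{k-j}\,P_j, \qquad P_j := \sum_{i=1}^{ap-1} i^{j},$$
which express $e_k$ recursively through the power sums $P_j$. The heart of the matter is then $\nu_p(P_j)$, and here the tool is Faulhaber's formula
$$ P_j = \frac{1}{j+1}\sum_{l=0}^{j}\binom{j+1}{l} B_l\,(ap)^{\,j+1-l},$$
combined with the von Staudt--Clausen theorem, which pins the pole of $B_l$ at $p$ to the residues $l$ with $(p-1)\mid l$, and with Kummer's congruences, which control when $\nu_p(B_l)=0$. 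The decisive input from Fermat's little theorem is that $i^{j}\equiv 1 \pmod p$ for $p\nmid i$ precisely when $(p-1)\mid j$, so the valuation of $P_j$ jumps according to $j \bmod (p-1)$ and to $\nu_p(j)$. This dichotomy, together with the explicit factor $k$ sitting on the left of Newton's identity, is the source of the residue quantities $\epsilon_k$ and $\langle k\rangle$ and of the factor $\nu_p(k)+1$ in the statement; the specific even-indexed Bernoulli number $B_{2\lfloor \langle k\rangle/2\rfloor}$ is exactly the one whose vanishing mod $p$ decides whether the bound in (ii) is attained, which is why regularity of $p$ upgrades the inequality to an equality.

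With the $\nu_p(P_j)$ in hand I would feed them back into Newton's identity and induct on $k$, tracking the valuation of each summand. Case (i) is the generic regime in which one summand strictly dominates and the valuation is computed exactly; case (ii) is the critical regime in which two summands have equal valuation, so one only gets a lower bound unless the competing Bernoulli contribution is a unit; and case (iii) is the range with $k$ close to $ap$, where the $\lfloor (ap-1)/p\rfloor = a-1$ arguments divisible by $p$ force many factors of $p$ into every surviving monomial, yielding the stated lower bound. The main obstacle I anticipate is precisely the bookkeeping in this induction: the recursion superposes terms of genuinely different $p$-adic size, and one must rule out the possibility that leading contributions cancel and drop $\nu_p(k\,e_k)$ below the predicted value. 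Separating the ``$(p-1)\mid j$'' terms from the rest and invoking Kummer's congruences to guarantee that the Bernoulli contributions do not conspire to cancel is the delicate step, and it is exactly this step that forces the regularity hypothesis and the appearance of the Bernoulli valuation in part (ii).
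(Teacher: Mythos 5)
First, a structural point you could not have seen: the paper does not prove this statement at all. It is quoted (with attribution) from Hong and Qiu \cite[Theorem 1.1]{Hong} and is used in the paper only to extract Corollary \ref{coro-4.1}; there is no in-paper proof to compare your attempt against. So the only question is whether your outline would amount to a proof of the cited result, and it would not. Your reductions are correct and well chosen: $s(ap,ap-k)=e_k(1,2,\ldots,ap-1)$, the congruence $(x)_{ap}\equiv (x^p-x)^a \pmod p$ (which does yield the coarse dichotomy that $s(ap,ap-k)$ is a $p$-adic unit exactly when $(p-1)\mid k$), Newton's identities, and Faulhaber's formula together with von Staudt--Clausen and Kummer to control $\nu_p(P_j)$ -- this is very plausibly the same skeleton as the actual Hong--Qiu argument. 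But everything the theorem genuinely asserts -- the exact value $(\nu_p(k)+1)\epsilon_k$ in (i), the threshold in (ii), and the equality criterion there in terms of $\nu_p\bigl(B_{2\lfloor \langle k\rangle/2\rfloor}\bigr)$ -- lives precisely in the step you defer: proving that the candidate dominant term in the Newton recursion is not cancelled by the superposition of the remaining terms, and extracting $\nu_p(e_k)$ from $\nu_p(k\,e_k)$ when $p\mid k$ (the case responsible for the factor $\nu_p(k)+1$; there the identity gives $\nu_p(e_k)=\nu_p(\text{RHS})-\nu_p(k)$, so a lower bound on the right-hand side is useless and its valuation must be computed exactly). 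Naming this bookkeeping ``delicate'' and observing that it is where regularity must enter is a description of the difficulty, not a resolution of it; as written, your argument produces neither the equalities of (i), nor the if-and-only-if of (ii), and part (iii) is not addressed beyond a heuristic about multiples of $p$ among the arguments.

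A second obstruction is that the statement as transcribed in this paper is corrupted, so the literal claim cannot be proved by anyone. In (ii) the bound ``$k\le q(p-1)-1$'' should read $a(p-1)-1$ (the letter $q$ denotes an unrelated prime elsewhere in this paper); the quantities $\epsilon_k$ and $\langle k\rangle$ are used but never defined in the statement (they are the parity indicator of $k$ and the reduced residue of $k$ modulo $p-1$, with $\epsilon_k$ defined only afterwards, in Corollary \ref{coro-4.1}); and the conclusion of (iii) should be $\nu_p\left(s(ap,ap-k)\right)\ge a+k-ap$, not $a+kap$. As printed, (iii) is false: for $p=7$, $a=4$, $k=26$ it would force $\nu_7\left(s(28,2)\right)\ge 4+26\cdot 28=732$, whereas $s(28,2)=27!\sum_{i=1}^{27}\frac{1}{i}$ has $\nu_7\left(s(28,2)\right)=3-1=2$, which matches the corrected bound $a+k-ap=2$. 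Any completed proof would therefore have to target the corrected statement in \cite{Hong}, not the one displayed here.
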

Setting $p$ as a regular prime greater than or equal to $5$ and putting $a =1$ in Theorem \ref{thm-4.1}, we obtain the following corollary.
\begin{corollary} \label{coro-4.1}
	Let $p \geq 5$ be a regular prime and let $k$ be an integer such that $2 \leq k \leq p-2.$ Then $\nu_p \left(s(p,p-k) \right) = \epsilon_k + 1$, where $\epsilon_k = 1 $ if $k$ is an odd integer and $\epsilon_k = 0 $ if $k$ is an even integer.
\end{corollary}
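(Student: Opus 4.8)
The plan is to obtain Corollary~\ref{coro-4.1} as the special case $a = 1$ of Theorem~\ref{thm-4.1}, so the work consists entirely of checking which of the three cases applies and then simplifying the resulting formula. First I would verify the hypotheses: with $a = 1$ the constraint $1 \leq a \leq p-1$ holds trivially, and the range $2 \leq k \leq ap - 2$ becomes $2 \leq k \leq p - 2$, which is exactly the range appearing in the corollary. No further admissibility conditions are needed.

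Next I would determine which case of the theorem governs this situation. Since $0 \le k \le p-2$, the residue $\langle k\rangle$ of $k$ modulo $p-1$ equals $k$ itself; and because $\epsilon_k \in \{0,1\}$ while $k \ge 2$, we have $k \not\equiv \epsilon_k \pmod{p-1}$. This excludes case (i). The secondary range condition of case (ii), namely $2 \le k \le a(p-1)-1 = p-2$, coincides with our hypothesis, so case (ii) is the relevant one and yields $\nu_p(s(p,p-k)) \ge (\nu_p(k)+1)\epsilon_k + 1$, with equality precisely when $\nu_p(B_{2\lfloor \langle k\rangle/2\rfloor})=0$.

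Here I would invoke regularity. Because $\langle k\rangle = k$, the relevant Bernoulli index is $2\lfloor k/2\rfloor$, and as $k$ ranges over $\{2,\dots,p-2\}$ this index takes even values in $\{2,\dots,p-3\}$, its maximum $p-3$ being attained at $k=p-3$ and $k=p-2$. Since $p$ is a regular prime, $p$ does not divide the numerator of any of $B_2,\dots,B_{p-3}$, so the equality clause of case (ii) is in force. This is precisely the final assertion of case (ii), which I can cite directly, giving $\nu_p(s(p,p-k)) = (\nu_p(k)+1)\epsilon_k + 1$.

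Finally I would simplify. Since $2 \le k \le p-2 < p$, the prime $p$ does not divide $k$, so $\nu_p(k)=0$ and the formula collapses to $\nu_p(s(p,p-k)) = \epsilon_k + 1$. It then remains only to record that $\epsilon_k$, read off from $\langle k\rangle = k$, equals $1$ when $k$ is odd and $0$ when $k$ is even (here using that $p-1$ is even, so $k$ and $\langle k\rangle$ share parity), which matches the statement. The only point requiring genuine care---and the closest thing to an obstacle---is confirming that we land in case (ii) rather than case (i); this hinges on the single observation $\langle k\rangle = k \ge 2$. Everything else is direct substitution.
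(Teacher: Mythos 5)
Your proposal is correct and takes essentially the same route as the paper, which obtains the corollary in one line by specializing Theorem \ref{thm-4.1} to $a=1$ with $p$ a regular prime. In fact you supply the details the paper leaves implicit---ruling out case (i) via $\langle k\rangle = k \geq 2 > \epsilon_k$, checking that the Bernoulli indices $2\lfloor k/2\rfloor$ stay within $\{2,\dots,p-3\}$ so regularity applies, and observing $\nu_p(k)=0$ since $k<p$---so the write-up is sound.
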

		Let $\phi(x) \in \mathbb{Z}_p[x]$ be a monic polynomial whose reduction is irreducible in $\mathbb{F}_p[x]$. We denote by $\mathbb{F}_{\phi}$ the field $\frac{\mathbb{F}_p[x]}{(\phi)}$. For any monic polynomial $f(x)\in \mathbb{Z}_p[x]$, the expression 	$$f(x)=\sum_{i=0}^{t}a_i(x)\phi(x)^i $$ 
		is called the $\phi$-expression of $f(x)$, which can be obtained upon performing the Euclidean division of $f(x)$ by successive powers of $\phi$. Here, $\deg(a_i(x))< \deg(\phi(x))$ for every $i$. The $\phi$-Newton polygon of $f(x)$ with respect to $p$, denoted by $N_{\phi}(f)$, is defined to be the lower boundary convex envelope of the set of  points $\{(i,\nu_p(a_i(x))): a_i(x) \neq 0\}$ in the Euclidean plane. 
		The $\phi_i$-index $\operatorname{ind}_{\phi}(f)$ is $\operatorname{deg}(\phi)$ times the number of points with natural integer co-ordinates that lie below or on the polygon $N_{\phi}(f)$, strictly above the horizontal axis, and strictly beyond the vertical axis.
		The following theorem is due to Ore, stated in~\cite[pp. 325]{nart}.
	\begin{theorem}\emph{ \cite[Theorem of the Index]{nart}} \label{thm-4.4}
		Let $p$ be a prime and $f(x) \in \mathbb{Z}[x]$ be a monic irreducible polynomial. Let $\bar{f}(x) = \prod_{i=1}^{t} \bar{\phi}(x)^{l_i}$ be the factorization of $f(x)$ into product of distinct irreducible polynomials over $\mathbb{F}_p$. Let $N_{\phi_i}(f)$ be the $\phi_i$-Newton polygon of $f(x)$ with respect to $p$. Then, $$\nu_p(\operatorname{ind}(f(x))) \geq \sum_{i=1}^{t} \operatorname{ind}_{\phi_i}(f).$$
	\end{theorem}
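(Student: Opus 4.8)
The plan is to prove Ore's inequality by passing to the local ring $\mathbb{Z}_p$ and then peeling off two layers of reduction — first by the distinct residual factors $\bar\phi_i$, then by the sides of each Newton polygon — until only single-sided polygons remain, where the lattice-point count can be matched against a $p$-adic valuation. Throughout write $\mathcal{O}$ for the integral closure of $\mathbb{Z}_p$ in the \'etale algebra $K\otimes_{\mathbb{Q}}\mathbb{Q}_p \cong \prod_{\mathfrak{p}\mid p}K_{\mathfrak{p}}$, so that $\nu_p(\operatorname{ind}(f)) = \operatorname{length}_{\mathbb{Z}_p}\!\big(\mathcal{O}/\mathbb{Z}_p[\theta]\big)$ and the statement becomes purely local; the right-hand side $\sum_i \operatorname{ind}_{\phi_i}(f)$ is, by definition, $\sum_i \deg(\phi_i)$ times the number of lattice points under $N_{\phi_i}(f)$ in the prescribed region.

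First I would separate the distinct residual factors. Hensel's lemma lifts $\bar f = \prod_{i=1}^{t}\bar\phi_i^{\,l_i}$ to a factorization $f=\prod_{i=1}^{t}F_i$ in $\mathbb{Z}_p[x]$ with each $F_i$ monic and $\bar F_i=\bar\phi_i^{\,l_i}$, the factors being pairwise coprime modulo $p$. Comparing the multiplicative discriminant identity $\triangle(f)=\prod_i\triangle(F_i)\cdot\prod_{i<j}R(F_i,F_j)^2$ with \eqref{eq-2.1} applied to $f$ and to each $F_i$, and using that $R(F_i,F_j)$ is a $p$-adic unit for $i\neq j$, yields $\nu_p(\operatorname{ind}(f))=\sum_i\nu_p(\operatorname{ind}(F_i))$. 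On the combinatorial side the theorem of the product gives $N_{\phi_i}(f)=N_{\phi_i}(F_i)$, since each $F_j$ with $j\neq i$ is a $\phi_i$-unit, so $\operatorname{ind}_{\phi_i}(f)=\operatorname{ind}_{\phi_i}(F_i)$. Hence it suffices to prove $\nu_p(\operatorname{ind}(F))\ge\operatorname{ind}_\phi(F)$ whenever $\bar F=\bar\phi^{\,l}$ is a single prime power.

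Next I would break $N_\phi(F)$ into its sides. By the theorem of the polygon, $F=\prod_S F_S$ in $\mathbb{Z}_p[x]$, one factor per side $S$ of slope $-\lambda_S=-h_S/e_S$ and horizontal length $\ell(S)$, with $\bar F_S=\bar\phi^{\,\ell(S)}$ and $N_\phi(F_S)$ a single side of that slope. The same discriminant comparison now gives $\nu_p(\operatorname{ind}(F))=\sum_S\nu_p(\operatorname{ind}(F_S))+\sum_{S<S'}\nu_p\big(R(F_S,F_{S'})\big)$, but this time the cross-resultants are not units: since the roots of $F_S$ and $F_{S'}$ have distinct normalized values of $\phi(\theta)$, evaluating $R(F_S,F_{S'})=\prod_{\beta}F_S(\beta)$ against the one-sided polygon of $F_S$ gives $\nu_p(R(F_S,F_{S'}))=\deg(\phi)\cdot\ell(S)\ell(S')\cdot\min(\lambda_S,\lambda_{S'})$, and an elementary planar count shows this is exactly the number of lattice points that the steeper side contributes over the horizontal span of the shallower one. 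Summing, $\sum_{S<S'}\nu_p(R(F_S,F_{S'}))$ accounts precisely for the ``interaction'' lattice points, so the problem collapses to the single-sided base case $\nu_p(\operatorname{ind}(F_S))\ge\operatorname{ind}_\phi(F_S)=\deg(\phi)\cdot\#\{\text{lattice points strictly under }S\}$. For a single side one attaches its residual polynomial; if it is separable one obtains equality and is done, and otherwise one recurses through higher-order $\phi$-expansions, each recursion contributing a non-negative higher-order index, which delivers the inequality.

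The step I expect to be hardest is the single-sided base case together with the resultant bookkeeping of the previous paragraph: one must verify with care both the closed form for $\nu_p(R(F_S,F_{S'}))$ and the planar identity recasting it as a lattice-point count, and then control the residual polynomial and the passage to higher-order Newton polygons so that the recursion only ever adds non-negative contributions. This is precisely the content of the Ore--Montes machinery — the theorems of the product, of the polygon, and of the residual polynomial — and it is where the equality case ($\phi$-regularity, i.e.\ all residual polynomials separable) is distinguished from strict inequality; for the stated inequality alone one needs only the non-negativity of the higher-order terms, but establishing that non-negativity rigorously is the crux.
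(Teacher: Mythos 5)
First, a point of order: the paper does not prove this theorem at all --- it is quoted directly from Montes and Nart \cite{nart} (Ore's ``Theorem of the Index''), so there is no internal proof to compare yours against; the comparison can only be made against the proof in that cited source. Measured against it, your outline reproduces the standard architecture correctly: localization to $\mathbb{Z}_p$ and the identification of $\nu_p(\operatorname{ind}(f))$ with a length; Hensel splitting along the distinct residual factors, where the cross-resultants are $p$-adic units because $R(\bar{F}_i,\bar{F}_j)=R(\bar{\phi}_i^{l_i},\bar{\phi}_j^{l_j})\neq 0$; splitting along the sides of each polygon, where the valuation $\deg(\phi)\,\ell(S)\ell(S')\min(\lambda_S,\lambda_{S'})$ of the cross-resultant matches the rectangular ``interaction'' blocks in the lattice-point count (your displayed formula is right, though your verbal gloss is backwards: the rectangle has the \emph{steeper} side's horizontal span and the \emph{shallower} side's height, $\ell(S)H(S')=\ell(S)\ell(S')\lambda_{S'}$ when $\lambda_{S'}<\lambda_S$); and finally a one-sided base case.

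The genuine weakness is that, as written, your proof is circular where it matters most: the steps you defer to --- the theorems of the product, of the polygon, and of the residual polynomial, plus ``higher-order $\phi$-expansions'' --- \emph{are} the Ore--Montes machinery whose centerpiece is the very theorem being proved, so invoking them wholesale is close to invoking the conclusion. Moreover, the recursion through higher-order Newton polygons is unnecessary for the stated inequality. The base case admits a short direct argument: if $N_\phi(F_S)$ is one-sided of slope $-\lambda$ and length $\ell$, then every root $\alpha$ of $F_S$ satisfies $\nu(\phi(\alpha))=\lambda$, hence the elements $\theta^u\phi(\theta)^i/p^{\lfloor i\lambda\rfloor}$ with $0\le u<\deg(\phi)$ and $0\le i<\ell$ all lie in $\mathcal{O}$; they span a $\mathbb{Z}_p$-module containing $\mathbb{Z}_p[\theta]$ (whose basis $\{\theta^u\phi(\theta)^i\}$ is obtained from the power basis by a unipotent change of basis, $\phi$ being monic) with index exactly $p^{\deg(\phi)\sum_{i=0}^{\ell-1}\lfloor i\lambda\rfloor}=p^{\operatorname{ind}_\phi(F_S)}$, which gives $\nu_p(\operatorname{ind}(F_S))\ge\operatorname{ind}_\phi(F_S)$ at once. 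Residual polynomials and separability are needed only to characterize the \emph{equality} case ($p$-regularity), which the statement here does not claim. With that replacement, and with honest proofs (rather than citations) of the product and polygon theorems and of the resultant valuation formula, your skeleton does assemble into a complete proof.
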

In the following theorem, we present a class of non-monogenic polynomials whose coefficients are Sterling numbers of the first kind.
	\begin{theorem}\label{thm-4.6}
		Let $ p \geq 7 $ be a regular prime, and let $ s \geq 2 $ be an integer. Suppose that the polynomial
		$$
		f(x) = (x)_p - (p-1)!x + p^s
		$$
		is irreducible over $\mathbb{Q}$, where $ (x)_p = x(x-1)(x-2)\cdots(x - p + 1) $ denotes the falling factorial. Then the polynomial $ f(x) $ is not monogenic.
	\end{theorem}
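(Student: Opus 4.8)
The plan is to prove that the prime $p$ divides the index $[\mathbb{Z}_K:\mathbb{Z}[\theta]]$, where $\theta$ is a root of $f$ and $K=\mathbb{Q}(\theta)$; since $p\geq 7>1$, this immediately shows $f$ is not monogenic. I would extract this divisibility from the $\phi$-Newton polygon of $f$ at $p$ together with Ore's Theorem of the Index (Theorem~\ref{thm-4.4}).

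\emph{Reduction modulo $p$.} The first step is to identify $\bar f$ over $\mathbb{F}_p$. Since $(x)_p=\prod_{a=0}^{p-1}(x-a)$ runs over every residue class, I would use $\prod_{a\in\mathbb{F}_p}(x-a)=x^p-x$ to get $(x)_p\equiv x^p-x\pmod p$, and Wilson's theorem $(p-1)!\equiv-1\pmod p$ to get $-(p-1)!\,x\equiv x\pmod p$. Combined with $p^s\equiv 0\pmod p$, these give $\bar f(x)=x^p$. Hence the only monic irreducible factor of $\bar f$ over $\mathbb{F}_p$ is $\phi(x)=x$, so Ore's theorem reduces the whole problem to the single Newton polygon $N_x(f)$, with $\deg(\phi)=1$.

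\emph{Valuations of the coefficients.} Writing $f(x)=\sum_{j=0}^{p}a_j x^j$, the middle coefficient $a_j$ is the signed Stirling number $(-1)^{p-j}s(p,j)$, so $\nu_p(a_j)=\nu_p(s(p,j))$ for $2\leq j\leq p$. The key data I would record are: $a_p=1$ so $\nu_p(a_p)=0$; $a_0=p^s$ so $\nu_p(a_0)=s\geq 2$; and, crucially, $a_1=(p-1)!-(p-1)!=0$, since the coefficient of $x$ in the falling factorial $(x)_p$ equals $(p-1)!$ (only the factor carrying the root $0$ contributes to the linear term), which is exactly cancelled by the $-(p-1)!\,x$ in $f$. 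Applying Corollary~\ref{coro-4.1} to the interior coefficients would yield $\nu_p(a_j)=2$ for even $j$ and $\nu_p(a_j)=1$ for odd $j$ in the range $2\leq j\leq p-2$, with $\nu_p(a_{p-1})=\nu_p\binom{p}{2}=1$; however these exact values are not actually needed for the conclusion.

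\emph{A lattice point under the polygon.} Because $a_1=0$, the polygon $N_x(f)$ has no vertex at abscissa $1$: its left endpoint is $(0,s)$ with $s\geq 2$, and its first edge runs to a vertex $(x_V,y_V)$ with $x_V\geq 2$ and $y_V\geq 0$. Evaluating this edge at $x=1$ gives height
\[
s+\frac{y_V-s}{x_V}=\frac{s(x_V-1)+y_V}{x_V}\ \geq\ \frac{s}{2}\ \geq\ 1 .
\]
Thus the lattice point $(1,1)$ lies on or below $N_x(f)$, strictly above the horizontal axis and strictly to the right of the vertical axis, so $\operatorname{ind}_x(f)\geq 1$. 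By Theorem~\ref{thm-4.4} this forces $\nu_p(\operatorname{ind}(f))\geq\operatorname{ind}_x(f)\geq 1$, i.e.\ $p\mid[\mathbb{Z}_K:\mathbb{Z}[\theta]]$, so $f$ is not monogenic. I expect the main obstacle to be the second step: verifying both the cancellation $a_1=0$ and the clean reduction $\bar f=x^p$ (which together depend on the interplay of the falling factorial with Wilson's theorem). Once those are secured, producing the interior point $(1,1)$ under the polygon is immediate from $s\geq 2$ and the absence of a point at abscissa $1$.
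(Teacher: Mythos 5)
Your proof is correct, and it follows the same skeleton as the paper's first proof: take $\phi(x)=x$ as the unique irreducible factor of $\bar f$, exhibit the lattice point $(1,1)$ on or below $N_{\phi}(f)$, and conclude $\nu_p(\operatorname{ind}(f))\geq \operatorname{ind}_{\phi}(f)\geq 1$ via Ore's Theorem of the Index (Theorem~\ref{thm-4.4}). Where you differ is in how you control the polygon, and your route is genuinely more economical. The paper determines the polygon explicitly through the Hong--Qiu valuations (Corollary~\ref{coro-4.1}), $\nu_p(s(p,p-k))=\epsilon_k+1$, which entails the case split $2\le s\le 4$ versus $s\ge 5$ and the explicit vertices $(0,s)$, $(2,2)$, $(3,1)$, $(p,0)$; this is precisely where the hypotheses that $p$ be regular and $p\ge 7$ are consumed. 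You instead obtain $\bar f=x^p$ from $(x)_p\equiv x^p-x\pmod p$ together with Wilson's theorem, and you place $(1,1)$ under the polygon by convexity alone: the cancellation $a_1=0$ (which you verify correctly, the linear coefficient of the falling factorial being $(-1)^{p-1}(p-1)!=(p-1)!$ for odd $p$, a point the paper uses only implicitly) forces the first edge to run from $(0,s)$ to a vertex at abscissa $x_V\ge 2$, so the polygon's height at abscissa $1$ is at least $s(x_V-1)/x_V\ge s/2\ge 1$. This bypasses Corollary~\ref{coro-4.1} entirely, so regularity of $p$ and the bound $p\ge 7$ never enter your argument; it proves the conclusion for every odd prime $p$, which is strictly more general than the paper's proof. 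Two small remarks: your reliance on the convention that points lying \emph{on} the polygon count toward $\operatorname{ind}_{\phi}(f)$ is exactly the paper's stated definition, so there is no gap there; and in fact you could get ``strictly below'' for free, since $\bar f=x^p$ forces every plotted point with abscissa in $\{2,\dots,p-1\}$ to have ordinate at least $1$, so either $y_V\ge 1$ or $(x_V,y_V)=(p,0)$, and in both cases the height at abscissa $1$ exceeds $1$.
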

We present two proofs of Theorem \ref{thm-4.6}. The first proof uses Newton polygon techniques.
	\begin{proof}[First Proof of Theorem \ref{thm-4.6}]
		We have $f(x) = p^2 + s(p, 2)x^2 + s(p, 3)x^3 + \cdots + s(p, p)x^p$. Using Corollary \ref{coro-4.1}, one can see that $p \mid s(p,k)$ for $2 \leq k \leq p-2$, therefore $\bar{f}(x) = x^p$. Let $\phi(x) = x$. Then, $N_{\phi(x)}(f)$ is the polygonal path formed by the lower edges along the convex hull of the following points:  $$(p,\nu_p\left(s(p,p)\right)) , (p-1, \nu_p\left(s(p,p-1)\right)), (p-2,\nu_p\left(s(p,p-2)\right)), \ldots, (2,\nu_p \left(s(p,2)\right)), (0, \nu_p(p^s)).$$ In view of Corollary \ref{coro-4.1}, for $2 \le k \le p - 2$, we have $\nu_p \left(s(p,p-k) \right) = \epsilon_k + 1$, where $\epsilon_k = 1 $, if $k$ is an odd integer and $\epsilon_k = 0$, if $k$ is an even integer.
		Since $s(p, p) = 1$ and $s(p, p-1) = {p \choose 2}$, we have $\nu_p(s(p, p)) = 0$ and $\nu_p(s(p, p-1)) = 1$, respectively. If $2 \leq s \leq 4$ then $N_{\phi}(f)$ consists of two sides obtained by joining the points $(0,s), (3,1)$ and $(p,0)$ (see Figure \ref{fig_1}). Similarly, if $s \geq 5$ then  $N_{\phi}(f)$ consists of three sides $S_1$, $S_2$  and $S_3$ obtained by joining the points $(0,s), (2,2), (3,1)$ and $(p,0)$ (see Figure \ref{fig:left}). In both the cases the point $(1,1)$ lies below $N_{\phi}(f)$. Therefore, $\operatorname{ind}_{\phi}(f) \geq 1$. Thus by Theorem \ref{thm-4.4}, $\operatorname{ind}(f) \geq 1$. This implies that $f(x)$ is not monogenic.
		\begin{figure}[ht]
			\centering
			\begin{minipage}[b]{0.48\textwidth}
				\centering
				\begin{tikzpicture}[scale=0.75]
					\draw [thick, ->] (-0.5,0) -- (9.5,0);
					\draw [thick, ->] (1, -1) -- (1,5);
					\vertex (3) at (1,4) [fill=black,label=left:${\scriptstyle{s}}$] {};
					\vertex (1) at (1,1.2) [fill=black,label=left:${\scriptstyle{1}}$] {};
					
					\vertex (3) at (3,0) [fill=black,label=below:${\scriptstyle{3}}$] {};
					\vertex (3) at (3,1.2) [fill=black,label=below:${\scriptstyle{(3,1)}}$] {};
					\vertex (4) at (8,0) [fill=black,label=below:${\scriptstyle{p}}$] {};
					\node (7) at (1.8,2.8) [label=right:${\scriptstyle{S_1}}$] {};
					\node (8) at (5.5,0.5) [label=above:${\scriptstyle{S_2}}$] {};
					\node (9) at (0.4,0.1) [label=below:${\scriptstyle{(0, 0)}}$] {};
					\draw (1,4) -- (3,1.2);
					\draw (3,1.2) -- (8,0);
				\end{tikzpicture}
				
				\vspace{0.5em}
				\caption{$N_{\phi}(f)$ for $2 \leq s \leq 4$}
				\label{fig_1}
			\end{minipage}	
			\hfill	
			\begin{minipage}[b]{0.48\textwidth}
				\centering
				\begin{tikzpicture}[scale=0.75]
					\draw [thick, ->] (-0.5,0) -- (9.5,0);
					\draw [thick, ->] (1, -1) -- (1,5);
					
					\vertex (1) at (1,0.8) [fill=black,label=left:${\scriptstyle{1}}$] {};
					\vertex (2) at (1,1.8) [fill=black,label=left:${\scriptstyle{2}}$] {};
					\vertex (3) at (1,4) [fill=black,label=left:${\scriptstyle{s}}$] {};
					\vertex (4) at (2.5,0) [fill=black,label=below:${\scriptstyle{2}}$] {};
					\vertex (5) at (4,0) [fill=black,label=below:${\scriptstyle{3}}$] {};
					\vertex (4) at (2.5,1.8) [fill=black,label=below:${\scriptstyle{(2,2)}}$] {};
					\vertex (4) at (4,0.8) [fill=black,label=below:${\scriptstyle{(3,1)}}$] {};
					\draw (1, 4) -- (2.5,1.8);
					\draw (2.5,1.8) -- (4,0.8);
					\draw (4,0.8) -- (7.8,0);
					\vertex (6) at (7.8,0) [fill=black,label=below:${\scriptstyle{p}}$] {};
					\node (9) at (0.4,0.1) [label=below:${\scriptstyle{(0, 0)}}$] {};
					\node (9) at (1.5,3.3) [label=right:${\scriptstyle{S_1}}$] {};
					\node (9) at (3,1.7) [label=right:${\scriptstyle{S_2}}$] {};
					\node (9) at (5,0.85) [label=right:${\scriptstyle{S_3}}$] {};
				\end{tikzpicture}
				
				\vspace{0.5em}
				\caption{$N_{\phi}(f)$ for $s \geq 5$}
				\label{fig:left}
			\end{minipage}%
		\end{figure}
	\end{proof}
	An alternative proof of Theorem \ref{thm-4.6} can be obtained using the following theorem of Jakhar and Khanduja.
	\begin{theorem} \emph{\cite[Thoerem 1.1]{sudesh}} \label{thm-4.5}
		Let $K = \mathbb{Q}(\theta)$ be an algebraic number field with $\theta$ in the ring $\mathbb{Z}_K$ of algebraic integers of $K$ having minimal polynomial $f(x)$ over $\mathbb{Q}$. Let $p$ be a prime and $i_p(f)$ denotes the highest power of $p$ dividing the index $[\mathbb{Z}_K : \mathbb{Z}[\theta]]$. Suppose that $\bar{f}(x) = \bar{\phi}_1(x)^{e_1} \cdots \bar{\phi}_r(x)^{e_r} $ is the factorization of $\bar{f}(x)$ by replacing each coefficient of $f(x)$ modulo $p$ into a product of powers of distinct irreducible polynomials
		over $\mathbb{Z}/ p \mathbb{Z}$ with $\phi_i(x) \in \mathbb{Z}[x]$ monic. Let $t_i \geq 0$ denote the highest power $\bar{\phi_i}(x)$ dividing the polynomial $\bar{N}(x)$, where $N(x)$ belongs to $\mathbb{Z}[x]$ is defined by $f(x) =  \prod_{i=1}^{r} \phi_i(x) + p^l N(x)$, $l\geq 1$, $\bar{N}(x) \neq \bar{0}$. Let $u_i $ stands for the non negative integer given by$$
		u_i = \left\{
		\begin{array}{ll}
			\frac{(e_i -1)l + \operatorname{gcd}(e_i, l+1)-1}{2} & \text{if } t_i > \frac{e_i}{l+1},  \\
			\operatorname{max}\{lt_i,\frac{(e_i -1)(l-1) + \operatorname{gcd}(e_i, l)-1}{2}  \}, & \text{otherwise.}
		\end{array}
		\right.
		$$
		Then the following hold:
		\begin{enumerate}
			\item $i_p(f) \geq \sum_{i=1}^{r} u_i \operatorname{deg}(\phi_i(x))$.
			\item If $\gcd (e_j, l) =1 $ and $t_j = 0$ whenever $e_j > 1$, then equality holds in $(1)$, i.e., $i_p(f) = \sum_{i=1}^{r} \frac{(e_i - 1)(l-1)}{2} \deg (\phi_i(x)) $.
		\end{enumerate}
	\end{theorem}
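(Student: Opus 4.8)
The plan is to prove both parts through the theory of $\phi$-Newton polygons, using Ore's index inequality (Theorem~\ref{thm-4.4}) for the lower bound in part~(1) and the regularity (equality) form of Ore's theorem for part~(2). The central task is to determine, for each $i$, the shape of the $\phi_i$-Newton polygon $N_{\phi_i}(f)$ directly from the single defining identity $f=\prod_k \phi_k^{e_k}+p^l N$, and then to count the lattice points lying on or below its principal part.

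First I would analyze the $\phi_i$-adic expansion $f=\sum_j a_j(x)\phi_i(x)^j$. Writing $G_i=\prod_{k\neq i}\phi_k^{e_k}$, so that $f=\phi_i^{e_i}G_i+p^l N$, the term $\phi_i^{e_i}G_i$ only affects the coefficients $a_j$ with $j\ge e_i$, and $\nu_p(a_{e_i})=0$ since $\bar G_i$ is coprime to $\bar\phi_i$. Hence for $0\le j<e_i$ one has $a_j=p^l N_j$, where $N_j$ is the $j$th $\phi_i$-coefficient of $N$; the hypothesis that $\bar\phi_i^{t_i}$ exactly divides $\bar N$ then yields $\nu_p(a_j)\ge l+1$ for $j<t_i$, $\nu_p(a_{t_i})=l$ when $t_i<e_i$, and $\nu_p(a_j)\ge l$ for $t_i\le j<e_i$. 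This locates $N_{\phi_i}(f)$ between an explicit ``minimal'' polygon and the true one: its principal part always lies on or above the lower convex hull of the points $(0,l+1),(t_i,l),(e_i,0)$ (with the obvious modification $\nu_p(a_0)=l$ when $t_i=0$), and since raising the actual valuations can only raise the hull, the number of lattice points below $N_{\phi_i}(f)$ is at least the number below this minimal hull.

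Next I would separate the two cases in the definition of $u_i$ by comparing the vertex $(t_i,l)$ with the chord from $(0,l+1)$ to $(e_i,0)$: the vertex lies strictly above the chord exactly when $(l+1)t_i>e_i$, i.e. $t_i>e_i/(l+1)$. In that case the minimal hull is the single segment $(0,l+1)\to(e_i,0)$, and the standard count of lattice points with positive integer coordinates on or below a segment with legs $l+1$ and $e_i$ (a Pick's-theorem computation) is exactly $\tfrac{(e_i-1)l+\gcd(e_i,l+1)-1}{2}=u_i$. In the complementary case $t_i\le e_i/(l+1)$ I would extract two independent lower bounds: the strip $1\le x\le t_i$ contributes at least $l t_i$ points because the polygon has height $\ge l$ there, while comparison of the minimal hull with the segment $(0,l)\to(e_i,0)$ yields at least $\tfrac{(e_i-1)(l-1)+\gcd(e_i,l)-1}{2}$ points; since the lattice-point count below $N_{\phi_i}(f)$ dominates both, it dominates their maximum $u_i$. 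Combined with Theorem~\ref{thm-4.4} this gives $i_p(f)\ge\sum_i\operatorname{ind}_{\phi_i}(f)\ge\sum_i u_i\deg\phi_i$, proving~(1).

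For the equality in~(2) I would invoke the regularity form of Ore's theorem, which upgrades $i_p(f)\ge\sum_i\operatorname{ind}_{\phi_i}(f)$ to an equality once $f$ is $p$-regular, that is, once the residual polynomial attached to every side of every $N_{\phi_i}(f)$ is separable. Under the hypotheses $t_j=0$ whenever $e_j>1$ and $\gcd(e_j,l)=1$, the valuation analysis collapses: for $t_i=0$ one has $\nu_p(a_0)=l$ and every interior point strictly above the segment $(0,l)\to(e_i,0)$, so $N_{\phi_i}(f)$ is exactly that single side; it has one slope and, since $\gcd(e_i,l)=1$, its attached residual polynomial has degree $\gcd(e_i,l)=1$ and is automatically separable, while factors with $e_i=1$ are regular trivially. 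Thus $f$ is $p$-regular, each contribution equals $\tfrac{(e_i-1)(l-1)}{2}\deg\phi_i$ (the lattice count specialized to $\gcd(e_i,l)=1$), and summation gives the stated formula. The main obstacle is the case-heavy third step: one must confirm that the minimal hull is genuinely the lower envelope forced by the hypotheses—so that increasing valuations can only increase the count—and then carry out the lattice-point counting cleanly, since the exact two-segment hull count is messier than $u_i$ and one must instead isolate the two clean bounds whose maximum is $u_i$.
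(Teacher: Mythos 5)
The paper does not prove this statement at all---it is quoted as Theorem~1.1 of Jakhar--Khanduja \cite{sudesh}---and your argument is a correct reconstruction along essentially the same lines as that source: read off the $\phi_i$-adic valuations forced by $f=\prod_k\phi_k^{e_k}+p^lN$ (namely $\nu_p(a_j)\ge l+1$ for $j<t_i$, $\nu_p(a_{t_i})=l$, $\nu_p(a_j)\ge l$ for $t_i\le j<e_i$, $\nu_p(a_{e_i})=0$), bound $N_{\phi_i}(f)$ below by the hull through $(0,l+1)$, $(t_i,l)$, $(e_i,0)$, count lattice points by the Pick-type formula, and finish with Ore's theorem---using, as you rightly note, its regularity (equality) form for part~(2), which goes beyond the inequality-only version stated as Theorem~\ref{thm-4.4} in this paper but is part of the cited Montes--Nart/Ore theory. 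Your case split at the threshold $t_i>e_i/(l+1)$, the two independent lower bounds whose maximum is $u_i$, and the separability check via residual polynomials of degree $\gcd(e_i,l)=1$ (with $e_i=1$ factors trivially regular) are all sound.
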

	We now present an alternative proof of Theorem \ref{thm-4.6}.
	\begin{proof}[Alternate proof of Theorem \ref{thm-4.6}]
		We have, $$f(x) = x^p + s(p,p-1)x^{p-1} + s(p,p-2) x^{p-2} + \cdots + s(p,2)x^2 + p^s.$$ In view of Corollary \ref{coro-4.1} and \eqref{eqn-new-1}-\eqref{eqn-new-5}, we have $\nu_p \left(s(p,p-1) \right) = 1$  and $\nu_p \left(s(p,p-k) \right) = \epsilon_k + 1$, where $\epsilon_k = 1 $, if $k$ is an odd integer and $\epsilon_k = 0$, if $k$ is an even integer. Therefore, we can write,
		\begin{align}
			f(x) & = x^p + p N(x),
		\end{align}
		where $ N(x)  = \frac{s(p,p-1) }{p}x^{p-1} + \frac{s(p,p-2)}{p} x^{p-2} + \cdots+ \frac{s(p,2)}{p} x^2 + p^{s-1}.$ 
		Since, we have $p^2 \mid s(p,2), p^2 \nmid s(p,3), p^2 \nmid s(p,p-1)$ and $s \geq 2$ we get 
		\begin{align}
			\bar{f}(x) & = x^p, \\
			\bar{N}(x) & =\left(\frac{s(p,p-1)}{p} \pmod p\right)x^{p-1}+ \cdots + \left(\frac{s(p,3)}{p} \pmod p\right)x^{3} \\
						& = x^3 \left(\left(\frac{s(p,p-1)}{p} \pmod p\right)x^{p-4}+ \cdots + \left(\frac{s(p,3)}{p} \pmod p\right)\right). \nonumber
		\end{align}
		According to the notation in Theorem \ref{thm-4.5}, we get $e_1=p, r = 1, l=1,  t_1 = 3.$ Since, $t_1 = 3 \geq e_1/ (l+1)
		 = p/2$, we have $u_1 = 3.$ Thus Theorem \ref{thm-4.5} says that $i_p(f) \geq u_1 \deg(\phi_1(x)) = 3$ for $s \geq 2$. Hence, $f(x)$ is not monogenic.
	\end{proof}
	\begin{remark}
		In Theorem \ref{thm-4.6}, we need the irreducibility of $f(x) = (x)_p - (p-1)!x + p^s$ over $\mathbb{Q}$. Though we couldn't prove the irreducibility, computational experiments for primes up to 100 using \texttt{SageMath} suggest that this polynomial is irreducible over $\mathbb{Q}$. 
	\end{remark}


\end{document}